\documentclass[10pt]{article}

\marginparwidth 0.25in 
\oddsidemargin 0.05in 
\evensidemargin 0.05in 
\marginparsep 0.05in
\topmargin 0.05in 
\textwidth 6.45in \textheight 8.5in

\usepackage[english]{babel}
\selectlanguage{english}
\usepackage[utf8]{inputenc}
\usepackage{graphicx}

\usepackage{amsmath}
\usepackage{amssymb}
\usepackage{upgreek}
\usepackage{bbold} 

\usepackage{amsthm}


\usepackage{enumerate}


\newtheorem{theorem}{Theorem}[section]
\newtheorem{proposition}{Proposition}[section]
\newtheorem{lemma}{Lemma}[section]
\newtheorem{corollary}{Corollary}[section]
\newtheorem{definition}{Definition}[section]

\usepackage{color}

\begin{document}
\author{Matthieu Jonckheere, Manuel Sáenz}
\date{}
\title{Asymptotic optimality of degree-greedy discovering of independent sets in Configuration Model graphs}
\maketitle

\begin{abstract}
  Finding independent sets of maximum size in fixed graphs is well known to be an NP-hard task. Using scaling limits, we characterise the asymptotics of sequential degree-greedy explorations and provide sufficient conditions for this algorithm to find an independent set of asymptotically optimal size in large sparse random graphs with given degree sequences. In the special case of sparse Erdös-Rényi graphs, our results allow to give a simple proof of the so-called $e$-phenomenon identified by Karp and Sipser for matchings and to give an alternative characterisation of the asymptotic independence number.
  
\end{abstract}

\section{Introduction}

Given a graph $G=(V,E)$, an \emph{independent set} is a subset of vertices $A \subseteq V$ where no pair of vertices are connected to each other (i.e., for every pair $x, y \in A$ we have that $\{ x, y \} \notin E$). In the sequel, the number of vertices will be denoted by $n$.

Independent sets are relevant in the study of diverse physical and communication models. For example in physics, where dynamics that generate independent sets are used to study the deposition of particles in surfaces \cite{cadilhe2007random,evans1993random} as well as to model the number of excitations in ultracold gases \cite{sanders2015sub}. In the context of communications, similar stochastic processes where used \cite{bermolen2014estimating,dhara2016generalized} for theoretically modelling the possible number of simultaneous transmissions of information within a WiFi network. 

An independent set is said to be \emph{maximal} if it is not strictly contained in another independent set; and is said to be \emph{maximum} if there is no other independent set of greater size. Given a (finite undirected) graph $G$, the size of the maximum independent set(s) is called the \emph{independence number} and is usually represented with $\alpha(G)$. Finding a maximum independent set (or its size) in a general graph is known to be NP-hard\cite{frieze1997algorithmic}. For sparse random graphs, characterising the size of maximum independent sets and defining algorithms that find independent sets of (asymptotically, i.e., as $n$ diverges) maximum size are two questions that have received a lot of attention in the last decades but remain largely open. Before explaining our main results, we review the extended body of literature on this problem.

Mainly two types of approaches have been followed to obtain maximum size independent sets in random graphs: on the one hand, using reversible stochastic dynamics (usually Glauber dynamics); and on the other hand, using sequential algorithms. The Glauber dynamics consists of a reversible Markov dynamics on graphs where vertices become occupied (at a fixed rate called the activation rate) when none of its neighbours are; and, if occupied, become unoccupied at rate $1$. When the rate of activation tends to infinity, this dynamics are easily shown to concentrate on configurations being independent sets of maximum size. Though this is a very useful property, and not unlike for many other discrete optimisation problems, these dynamics might not be helpful in practice as the convergence towards a maximum size configuration can be extremely slow when the activation rate is large. In some special cases, the mixing time has been theoretically characterised. For example \cite{vigoda2001note}, when the degree distribution is bounded by $\Delta \geq 0$ and the activation rate is small ($\beta < \frac{2}{\Delta - 2}$) the mixing time is $\mathcal{O}(n \log(n))$. In the case of a bipartite regular graph, it was shown \cite{galvin2006slow} that for $\beta$ large enough the mixing time is actually exponential in $n$. Finally, this method does not allow to theoretically characterise the (asymptotical) independence number.

A completely different approach consists in defining algorithms that explore the graph sequentially (and hence terminate the exploration in less than $n$ steps). For this, at each step $k\geq0$ the vertex set is partitioned in three: the \emph{unexplored vertices} $\mathcal{U}_k$, the \emph{active vertices} $\mathcal{A}_k$ and the \emph{blocked vertices} $\mathcal{B}_k$. A typical sequential algorithm works as follows. Initially, it sets $\mathcal{U}_0 = V$, $\mathcal{A}_0 = \emptyset$ and $\mathcal{B}_0 = \emptyset$. To explore the graph, at the $k+1$-th step it selects a vertex $v_{k+1}\in\mathcal{U}_k$ (possibly taking into account its current or past degree towards other unexplored vertices), and changes its state into active. After this, it takes all of its unexplored neighbours, i.e.\ the set $\mathcal{N}_{v_{k+1}} := \{ w \in \mathcal{U}_k | (v_{k+1},w) \in E \}$, and changes their states into blocked. This means that, if in the $k+1$-th step vertex $v_{k+1}$ is selected, the resulting set of vertices will be given by $\mathcal{U}_{k+1} = \mathcal{U}_k \backslash \{ v_{k+1} \cup \mathcal{N}_{v_{k+1}} \}$, $\mathcal{A}_{k+1} = \mathcal{A}_k \cup \{ v_{k+1} \}$ and $\mathcal{B}_{k+1} = \mathcal{B}_k \cup \mathcal{N}_{v_{k+1}}$. Note that at each step, the set of active vertices defines an independent set. The algorithm keeps repeating this procedure until the step $k_n^*$ in which all vertices are either active or blocked (or equivalently $\mathcal{U}_{k_n^*} = \emptyset$). The set of active vertices at step $k_n^*$ then defines a maximal independent set.

In the \emph{greedy algorithm}, during the $k$-th step the activated vertex $v_k$ is selected uniformly from the subgraph of remaining vertices $G_k$ (i.e., the subgraph formed by the unexplored vertices). If $G$ is a graph, we will call $\sigma_{Gr}(G)$ the size of the independent set obtained by the greedy algorithm ran on $G$. This algorithm has been extensively studied, specially in the context of Erdös-Rényi graphs. There are many ways of approaching the problem of determining the asymptotic value of the independent set obtained by the greedy algorithm in a sparse Erdös-Rényi graph. For example, there have been \cite{grimmett1975colouring, karp1976probabilistic} combinatorial analysis of the problem. More akin to the rest of the paper, in \cite{Bermolen2017} an hydrodynamic limit for a one-dimensional Markov process associated to the algorithm was proved. The description of this exploration process in a Configuration Model cannot be described as a one-dimensional Markov process, as the unexplored vertices have degrees (towards other unexplored) that are not interchangeable and that depend in a complicated way on the evolution of the process. This makes the analysis much more involved that in the case of an Erdös-Rényi graph. There have been two works in the literature that describe an hydrodynamic limit for this process. In \cite{bermolen2017jamming}, an hydrodynamic limit for the degree distribution of the remaining graph was obtained. While in \cite{brightwell2016greedy}, a similar hydrodynamic limit was proved but for a modified dynamics that allows for a simplification of the limiting differential equations. Through these limits, the size of the independent set is determined.

The \emph{degree-greedy} algorithm is a variation of the greedy algorithm that takes into account the degree of the vertices in the remaining graph. During the $k$-th step an unexplored vertex $v_k$ is selected uniformly from the vertices of \emph{minimum degree} (towards other unexplored vertices) within the remaining subgraph $G_k$. If $G$ is a graph, we will denote by $\sigma_{DG}(G)$ the (possibly random) proportion of vertices in the independent set obtained by the degree-greedy algorithm ran on $G$. Although having been studied in the computer science community (for example, in \cite{halldorsson1997greed}), there are few exact mathematical results that characterise or bound the independent set found by this algorithm. A remarkable exception can be found in \cite{karp1981maximum,aronson1998maximum} which, although considering a completely different problem (namely, maximum matchings), imply that the degree-greedy algorithm is asymptotically optimal for Erdös-Rényi graphs when the mean degree is $\lambda < e$, a result coined as the $e$-phenomenon. Results by Wormald \cite{wormald1995differential} also describe an hydrodynamic limit for the process generated by the degree-greedy algorithm when run on a $d$-regular graph but without discussing asymptotic optimality.

\paragraph{Characterisation of maximum independent sets.} 
In the case of Erdös-Rényi graphs, their independence numbers are better understood in the case of large connection probabilities. In this case, a second moment argument \cite{bollobas1998random} yields its convergence in probability towards $- \frac{2 \log np}{\log q}$. In this same context, the relationship between the independence number and the asymptotic size of the independent set obtained by the greedy algorithm is also known to be $1/2$. In the case of sparse Erdös-Rényi graphs, as a consequence of the results proved in \cite{bollobas1976cliques,frieze1990independence}, similar (but weaker) results can be shown. This shows that the greedy algorithm has, for $G$ a sparse Erdös-Rényi graph of mean degree $\lambda$, a performance ratio $\sigma_{Gr}(G)/\alpha(G)$ that is $1/2$ asymptotically in $n$ for large $\lambda$. As mentioned before, it follows from \cite{karp1981maximum,aronson1998maximum} that the degree-greedy algorithm finds asymptotically a.s. maximum independent sets in Erdös-Rényi graphs of mean degree smaller than $e$. In the same works, they also characterise the asymptotic independence number of these graphs as the combination of the roots of certain functions.

The proof of the existence of a limiting independence ratio for random $d$-regular graphs was given in \cite{bayati2010combinatorial}. In \cite{lauer2007large}, Wormald proved a lower bound for the independence number of a $d$-regular graph. While in the same work, Wormald (and Gamarnik and Goldberg independently in \cite{gamarnik2010randomized}) showed that the proportion of vertices in the independent set found by a greedy algorithm in a $d$-regular graph is (for $d\geq3$), asymptotically (in probability) when $n\rightarrow\infty$ and the girth\footnote{Length of the shortest cycle.} $g \rightarrow\infty$, given by a certain function of $d$. Moreover, bounds were also proved in \cite{bollobas1981independence,mckay1987lnl} and an alternative lower bounds in \cite{frieze1992independence}. Finally, in a recent work \cite{ding2016maximum}, the exact large number law for the independence ratio of regular graphs of sufficiently large $d$ was established as the solution of a polynomial equation.

\paragraph{Contribution.} 
Both characterisation of independence numbers  and algorithms to discover independent sets in (deterministic) polynomial times for large sparse random graphs, e.g. in Configuration Models, are still open problems for most cases (the only exceptions being the characterisations of the independence numbers of regular graphs of large enough degree and the optimality of degree-greedy for sparse Erdös-Rényi graphs with $\lambda < e$). 

Using scaling limits on sequential explorations that select only degree 1 vertices, we decompose the exploration in different steps and we show that these steps can be described as a combination of two maps acting on the degree-distribution of the graph. Using these results, we show that for a large class of sparse random graphs, a degree-greedy exploration is actually asymptotically optimal. We first give a sufficient condition, which can be easily verified in practice, for this exploration to be optimal \emph{in one step}. We then show how to generalise this sufficient condition by characterising the remaining graph after \emph{several steps}. Finally, we study the case of Poisson distribution (which is asymptotically equivalent to Erdös-Rényi graphs) and show, in an alternative way, that when the mean degree is smaller than $e$ the exploration is asymptotically optimal. We use this fact to give a new characterisation of the independence number under these circumstances. 

We now state rigorously our results and give various examples.


\section{Main results}

We first state lemmas for deterministic graphs. Afterwards, we enunciate optimality results for random graphs when the hydrodynamic limit of the degree-greedy sequential exploration selects only degree 1 vertices. We then show that this property is actually satisfied by a large class of random graphs (including all strictly subcritical graphs, in the connectivity sense). We finally proceed to characterise this class of graphs.

\subsection{First characterisation of degree-greedy asymptotic optimality}

\paragraph*{Criterion for deterministic graphs}

In Lemma \ref{lemma:optimal} below, we show that any algorithm that selects at each step of its implementation a vertex of degree 1 (or 0) is optimal in the sense that it finds a maximum independent set. An analogous version of this lemma for matchings is stated in \cite{karp1981maximum}.

We introduce now some definitions to state this more precisely.

\begin{definition} 
    Given a graph $G = (V,E)$, we call a finite sequence $W=\{w_1,w_2,\ldots,w_m\}$ ($m\leq n$) of distinct vertices of $V$ a \textbf{selection sequence } (of $G$) if no vertex in $W$ is neighbour to another vertex in $W$ and every vertex in $V$ is either in $W$ or neighbour of a vertex in $W$.
\end{definition}

Note that the conditions in this definition ensure that the vertices in $W$ define a maximal independent set. By definition, sequential algorithms define random selection sequences.

\begin{definition}\label{def:remaining}
 Let $W=\{w_1,...,w_m\}$ be a selection sequence. Then, for every $1 \leq i \leq m$, we denote by \textbf{$i$-th remaining subgraph}, the subgraph formed by the vertices that are neither in $\{w_1,...,w_i\}$ nor neighbours of any of them. We denote it by $G_i$ and we define $G_0 := G$.
\end{definition}

When there is no ambiguity to which value of $i$ the remaining graph corresponds to, we just call it the \emph{remaining graph}. When analysing the degree-greedy algorithm, the remaining graphs will refer to the remaining graphs with respect to the selection sequence defined by the algorithm. Of course, as a selection sequence $W = \{ w_1,...,w_m\}$ always determines a maximal independent set, $G_m = \emptyset$.

The degree-greedy algorithm run on a finite graph $G$ can be thought of as a random selection sequence $W_{DG}$, built inductively in the following manner: given $\{w_1,...,w_k\}$ the first $k \geq 1$ vertices of $W_{DG}$, $w_{k+1}$ is a vertex chosen uniformly from the lowest degree vertices of $G_k$.

\begin{definition}
    Let $W$ be a selection sequence. We say that $W$ \textbf{has the property $T_1$} if for every $1\leq i \leq m$ the degree of $w_i$ in $G_{i-1}$ is equal or less than $1$.
\end{definition}

Then, a selection sequence has the property $T_1$ if at each step it \emph{selects} a vertex that has degree either 0 or 1 in the corresponding remaining graph.

We are now in a position to state our first lemma.
\begin{lemma}\label{lemma:optimal}
Let $G$ be a finite graph and $W$ be a selection sequence of $G$. Then, if $W$ has the property $T_1$, $|W| = \alpha(G)$.
\end{lemma}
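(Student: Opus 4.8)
The plan is to induct on the number of vertices $n=|V|$ (equivalently, on $m=|W|$), with the following \emph{exchange claim} as the engine of the argument: if a finite graph $H$ has a vertex $v$ with $\deg_H(v)\le 1$, then $H$ admits a maximum independent set containing $v$. To see this, take any maximum (hence maximal) independent set $I$ of $H$. If $\deg_H(v)=0$ and $v\notin I$, then $I\cup\{v\}$ is independent and strictly larger, contradicting maximality, so $v\in I$. If $\deg_H(v)=1$ with unique neighbour $u$ and $v\notin I$, then maximality of $I$ forces $v$ to have a neighbour in $I$, i.e. $u\in I$; replacing $u$ by $v$ yields $I'=(I\setminus\{u\})\cup\{v\}$, which is independent (the only edge incident to $v$, namely $\{u,v\}$, has been destroyed) and of the same size, hence a maximum independent set containing $v$.

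For the inductive step, let $W=\{w_1,\dots,w_m\}$ be a selection sequence of $G$ with property $T_1$, and let $R$ be $\{w_1\}$ together with the (at most one) neighbour of $w_1$, so that $G_1=G\setminus R$ and $|V(G_1)|<|V(G)|$. First I would show $\alpha(G)=\alpha(G_1)+1$: for ``$\ge$'' extend a maximum independent set of $G_1$ by $w_1$ (legitimate, since no vertex of $G_1$ is adjacent to $w_1$); for ``$\le$'' use the exchange claim to pick a maximum independent set $I$ of $G$ with $w_1\in I$, and note $I\setminus\{w_1\}$ is an independent set of $G_1$ (its vertices are distinct from $w_1$ and non-adjacent to $w_1$), whence $\alpha(G_1)\ge \alpha(G)-1$. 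Next I would check that $W'=\{w_2,\dots,w_m\}$ is a selection sequence of $G_1$ having property $T_1$, with $i$-th remaining subgraph equal to $G_i$ for all $i\ge 1$: the pairwise non-adjacency is inherited from $W$; each $w_j$ with $j\ge 2$ indeed lies in $G_1$ because members of a selection sequence are pairwise non-adjacent (so $w_j$ is neither $w_1$ nor a neighbour of $w_1$); every vertex $x\in V(G_1)$ is either in $W$, and then in $W'$ since $x\notin R$, or a neighbour of some $w_j$, and then necessarily $j\ge 2$ since $x\notin R$; finally the remaining subgraphs of $W'$ relative to $G_1$ coincide with $G_1,G_2,\dots$ by Definition~\ref{def:remaining}, and the degree bound $\deg_{G_{i-1}}(w_i)\le 1$ for $i\ge 2$ transfers verbatim. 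Applying the induction hypothesis to $G_1$ gives $|W'|=\alpha(G_1)$, hence $|W|=1+|W'|=1+\alpha(G_1)=\alpha(G)$. The base case $V=\emptyset$ is immediate since then $W=\emptyset$ and $\alpha(G)=0$.

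I expect the only delicate point to be the bookkeeping in the paragraph above, namely verifying that peeling off $w_1$ (and its neighbour) from both the graph and the head of the sequence leaves a genuine selection sequence of $G_1$ with exactly the same tail of remaining subgraphs; in particular one must be sure the covering condition of the definition of a selection sequence is preserved and that no $w_j$ with $j\ge 2$ was inadvertently deleted. The remainder is the standard degree-$1$ reduction for independent sets encapsulated in the exchange claim.
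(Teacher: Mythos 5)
Your proof is correct and follows essentially the same route as the paper's: induction on the number of vertices, peeling off $w_1$ and its (at most one) neighbour, and exploiting the fact that some maximum independent set of $G$ can be chosen to meet $\{w_1\}\cup N(w_1)$. The only difference is presentational: the paper argues by contradiction (a maximum independent set $A$ must contain $w_1$ or $n_1$, so $A\setminus\{w_1,n_1\}$ would be an independent set of $G_1$ larger than $\alpha(G_1)$), whereas you prove the exchange claim and the identity $\alpha(G)=\alpha(G_1)+1$ directly, and you also spell out the bookkeeping that $W'=\{w_2,\dots,w_m\}$ is a $T_1$ selection sequence of $G_1$ with the expected tail of remaining subgraphs, a step the paper leaves implicit.
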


\paragraph*{$T_1$ property and asymptotic optimality of the degree-greedy exploration.}
From now on, we use the usual \emph{big $\mathcal{O}(\cdot)$ and little $o(\cdot)$} notation to describe the asymptotic behaviour of functions of the graph size $n$. We also use the \emph{probabilistic big $\mathcal{O}_{\mathbb{P}}(\cdot)$ and little $o_{\mathbb{P}}(\cdot)$} notation in the following sense:

\begin{itemize}
    \item A sequence of random variables $X_n$ is $\mathcal{O}_{\mathbb{P}}(f_n)$ (for some function $f_n : \mathbb{N} \rightarrow \mathbb{R}_{>0}$) if for every $\epsilon > 0$ there exists $M > 0$ and $N \in \mathbb{N}$ s.t. $\mathbb{P}(|X_n / f_n| > M ) < \epsilon$ for every $n \geq N$.
    \item Conversely, a sequence of random variables $X_n$ is $o_{\mathbb{P}}(f_n)$ if $X_n/f_n \xrightarrow{\mathbb{P}} 0$ as $n \rightarrow \infty$.
\end{itemize}

We will also say that an event holds \emph{with high probability} (w.h.p.) whenever its probability is some function $1+o(1)$ of the graph size.

Throughout the paper we will consider random graphs with given degrees, (a.k.a., Configuration Models \cite{van2016random}). In this construction, given a \emph{degree sequence} $\bar d^{(n)} \in \mathbb{N}_0^n$ (which could either be a fixed sequence or a collection of i.i.d. variables), we form an $n$-sized \emph{multigraph}\footnote{That is, a graph where there are possibly edges between a vertex and itself (\emph{selfedges}) and multiple edges between a pair of vertices (\emph{multiedges}).} with degrees $\bar d^{(n)}$ in the following manner: first assign to each vertex $v \in \{1,...,n\}$ a number $d_v$ of half-edges, then sequentially match uniformly each half-edge with another unmatched one, and finally for every pair of vertices in the multigraph establish an edge between them for every pair of matched half-edges they share. The distribution of the random multigraphs generated according to this matching procedure will be denoted by $\mbox{CM}_n(\bar d^{(n)})$. Because all the matchings are made uniformly, the resulting multigraph is equally distributed no matter in which order the half-edges are chosen to be paired \cite{van2016random}. This fact allows, when analysing a process in the graph, for the matching to be incorporated into the dynamics in question.

In the remaining of the paper we will consider Configuration Model graphs that obey the following assumption: 

\vspace{0.5cm}
\noindent
\textbf{Convergence assumption (CA):} \emph{when dealing with sequences of graphs $(G_n)_{n\geq1}$ with $G_n \sim \mbox{CM}_n(\bar d^{(n)})$, we will always assume that $D^{(n)} \xrightarrow{\mathbb{P}} D$ and $\mathbb{E}(D^{(n)2}) \xrightarrow{n\rightarrow\infty} \mathbb{E}(D^2)$. Where $D^{(n)}$ is the r.v. that gives the degree of a uniformly chosen vertex in $G_n$ and $D$ is a random variable with finite second moment. We will also use $(p^{(n)}_k)_{k\geq0}$ to refer to the distribution of $D^{(n)}$ and $(p_k)_{k\geq0}$ to the one of the asymptotic degree random variable $D$.}

\vspace{0.5cm}

When there is no ambiguity, we will omit the subindex in $G_n$. 

Although this construction results in a multigraph rather than a \emph{simple graph}\footnote{One with no self nor multiedges.}, as showed in \cite{janson2009probability}, because under the (CA) there is asymptotically a probability bounded away from 0 of obtaining a simple graph. This means that any event that has been shown to hold w.h.p. for $G \sim \mbox{CM}_n(\bar d^{(n)})$, can be automatically showed to hold w.h.p. for the construction conditioned to result in a simple graph.

Another important feature of the model is that its largest connected component asymptotically contains a positive proportion of the vertices of the graph iff $\nu := \mathbb{E}(D(D-1))/\mathbb{E}(D) > 1$ \cite{molloy1998size,janson2009new} (this quantity will be referred as the \emph{criticality parameter} of the graph). When this condition holds, we will say that the graph is \emph{supercritical}; and when it does not, that it is \emph{subcritical}.

We can now state a sufficient condition for the degree-greedy algorithm to find w.h.p. an independent set that asymptotically contains the same proportion of vertices as a maximum one: 

\begin{proposition}[Sufficient condition for the near optimality of the degree-greedy in CM]\label{prop:optimalidadCM} 
    Let $G \sim \mbox{CM}_n(\bar d^{(n)})$ be a sequence of graphs distributed according to the CM, and assume that the limiting $(p_k)_{k\in\mathbb{N}}$. If the degree-greedy algorithm defines w.h.p. a selection sequence that selects only vertices of degree 1 or 0 until the remaining graph is subcritical and has a degree distribution that is $\mathcal{O}(e^{-\gamma k})$ (for some $\gamma > 0$), then (for every $\alpha > 0$) $\sigma_{DG}(G) = \alpha(G) + \mathcal{O}_{\mathbb{P}}(n^\alpha)$.
\end{proposition}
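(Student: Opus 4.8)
The strategy is to combine three ingredients: (i) Lemma \ref{lemma:optimal}, which says that any selection sequence with property $T_1$ is exactly optimal; (ii) the fact that the degree-greedy exploration is, by hypothesis, such a sequence *up until* the remaining graph becomes subcritical; and (iii) a deterministic bound showing that a subcritical remaining graph contributes only $\mathcal{O}_{\mathbb{P}}(n^\alpha)$ to the discrepancy between $\sigma_{DG}(G)$ and $\alpha(G)$.

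First I would set up the decomposition. Let $\tau_n$ be the first step at which the remaining graph $G_{\tau_n}$ (in the sense of Definition \ref{def:remaining}, along the degree-greedy selection sequence $W_{DG}$) is subcritical; by hypothesis this event occurs w.h.p.\ and up to step $\tau_n$ every selected vertex has degree $0$ or $1$. Write $W_{DG} = W' \cup W''$, where $W'$ is the portion selected before time $\tau_n$ and $W''$ the portion selected afterwards (the degree-greedy completion of a maximal independent set inside $G_{\tau_n}$, roughly speaking — more precisely, $W''$ is a selection sequence of the graph $G_{\tau_n}$). Because $W'$ has property $T_1$ on the graph $G$, a relative version of Lemma \ref{lemma:optimal} should give that extending any maximum independent set of $G_{\tau_n}$ by $W'$ yields a maximum independent set of $G$; equivalently $\alpha(G) = |W'| + \alpha(G_{\tau_n})$. (This relative statement is what the $T_1$ argument of Lemma \ref{lemma:optimal} actually proves step by step: removing a degree-$\le 1$ vertex together with its neighbour, if any, decreases $\alpha$ by exactly one and never destroys optimality.) Hence
\[
  \alpha(G) - \sigma_{DG}(G) \;=\; \bigl(\,|W'| + \alpha(G_{\tau_n})\,\bigr) - \bigl(\,|W'| + |W''|\,\bigr) \;=\; \alpha(G_{\tau_n}) - |W''| \;\le\; \alpha(G_{\tau_n}),
\]
so it suffices to show $\alpha(G_{\tau_n}) = \mathcal{O}_{\mathbb{P}}(n^\alpha)$ for every $\alpha>0$.

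The remaining work is therefore to control the size of a subcritical remaining graph. Here I would use that $G_{\tau_n}$, conditionally on its degree sequence, is again a configuration model (the half-edge matching can be revealed on the fly, so the unmatched stubs of the remaining graph form a fresh uniform matching), and that it is subcritical: $\nu(G_{\tau_n}) = \mathbb{E}(D_{\tau_n}(D_{\tau_n}-1))/\mathbb{E}(D_{\tau_n}) < 1$ w.h.p. In a subcritical configuration model every connected component has $\mathcal{O}(\log n)$ vertices w.h.p.; more to the point, the total number of vertices lying in components that are not isolated vertices or isolated edges — the only vertices that can possibly be "wasted" by greedy relative to optimal — is small. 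One clean route: bound $\alpha(G_{\tau_n})$ crudely by the number of vertices of $G_{\tau_n}$, and show that a subcritical remaining graph has only $n^{o(1)}$, in fact $\mathcal{O}_{\mathbb{P}}(n^\alpha)$, vertices. To see the vertex count is small, note that as long as the exploration keeps picking degree-$\le 1$ vertices it is essentially peeling the graph, and the hypothesis that $(p_k)$ decays exponentially (so all exponential moments of $D$ are finite) feeds into the hydrodynamic/fluid-limit description referred to in the introduction: the trajectory of the degree distribution is governed by a smooth ODE system, the criticality functional $\nu(\cdot)$ evolves continuously along it, and once it crosses below $1$ the number of surviving stubs — hence surviving vertices — has already collapsed to a vanishing fraction. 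Quantifying the collapse rate near the critical crossing, using the exponential-tail assumption to get the required polynomial-in-$n$ (indeed sub-polynomial) bound rather than merely $o(n)$, is the delicate estimate; this is where the $\mathcal{O}(e^{-\gamma k})$ hypothesis and the freedom to take $\alpha$ arbitrarily small are both used.

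The main obstacle I anticipate is precisely this last quantitative step: upgrading "the remaining subcritical graph has $o(n)$ vertices" (which would follow softly from the fluid limit) to the sharp "$\mathcal{O}_{\mathbb{P}}(n^\alpha)$ for all $\alpha>0$." This needs either a concentration/large-deviation analysis of the exploration near the subcriticality threshold — controlling how fast the giant dissolves and ruling out a slowly-decaying tail of medium components — or a direct structural argument that a configuration model with $\nu<1-\delta$ and exponential degree tails has, w.h.p., total size of all components of size $\ge 2$ bounded by a polynomial of arbitrarily small degree. The exponential control on $(p_k)$ should be exactly the hypothesis that makes the branching-process comparison sharp enough for this; assembling it carefully, and making sure the "conditionally still a CM" reduction for $G_{\tau_n}$ is legitimate (the degree sequence of $G_{\tau_n}$ is itself random and correlated with the history, so one must condition appropriately and invoke the order-independence of the matching), is the crux of the proof.
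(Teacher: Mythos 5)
Your reduction to $\alpha(G)-\sigma_{DG}(G)=\alpha(G_{\tau_n})-|W''|$ via a relative form of Lemma~\ref{lemma:optimal} is sound and in the same spirit as the paper (the paper phrases it as a coupling with a modified graph $\tilde G$, but the content is the same). The gap is in the step that follows: you then bound the discrepancy by $\alpha(G_{\tau_n})$ alone, and set out to show that $\alpha(G_{\tau_n})=\mathcal{O}_{\mathbb{P}}(n^\alpha)$ on the grounds that by the time the exploration crosses criticality the surviving vertex count has ``collapsed to a vanishing fraction.'' That premise is false. Subcriticality of a configuration model controls the \emph{component sizes} (all $\mathcal{O}(\log n)$, say), not the total number of vertices, which in general remains $\Theta(n)$. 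The cleanest counterexample is built into the proposition itself: in the already-subcritical case (e.g.\ Poisson with $\lambda<1$) one has $\tau_n=0$ and $G_{\tau_n}=G$, a graph on $n$ vertices with $\alpha(G_{\tau_n})=\Theta(n)$. Your bound would then require showing $\alpha(G)=\mathcal{O}_{\mathbb{P}}(n^\alpha)$, which is hopeless. Discarding $|W''|$ throws away exactly the information you need.

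What must actually be shown is that $\alpha(G_{\tau_n})-|W''|$ (the \emph{gap} between optimal and what degree-greedy achieves on the subcritical remnant) is $\mathcal{O}_{\mathbb{P}}(n^\alpha)$, not that $\alpha(G_{\tau_n})$ itself is small. The paper's route: a subcritical configuration model with exponentially thin degree tail has, w.h.p., only $\mathcal{O}_{\mathbb{P}}(n^\alpha)$ vertices sitting in components that are \emph{not trees} (Proposition~\ref{prop:2coreChico}, proved via a first-moment bound on the number of surplus edges $N(C(u))$ in breadth-first explorations, together with a branching-process coupling with exponential tails on the total progeny). One then couples the degree-greedy run on $G_{\tau_n}$ with a run on the spanning forest $T_{G_{\tau_n}}$ of its components, component by component: on tree components the two runs are identical, and degree-greedy on a forest is \emph{exactly} optimal with the $T_1$ property (Corollary~\ref{coro:arbol}). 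The two runs can therefore differ only on the non-tree components, hence by $\mathcal{O}_{\mathbb{P}}(n^\alpha)$ vertices; and since $\alpha(T_{G_{\tau_n}})\geq\alpha(G_{\tau_n})$ by edge-monotonicity of the independence number, the chain $|W''| \geq |W_{DG}(T_{G_{\tau_n}})| - \mathcal{O}_{\mathbb{P}}(n^\alpha) = \alpha(T_{G_{\tau_n}}) - \mathcal{O}_{\mathbb{P}}(n^\alpha) \geq \alpha(G_{\tau_n}) - \mathcal{O}_{\mathbb{P}}(n^\alpha)$ closes the argument. This tree-versus-non-tree decomposition, rather than a vertex-count collapse, is the structural fact the exponential-tail hypothesis is really being used for.
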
    
    
This is so because a subcritical graph looks like (up to sufficiently small differences) a collection of trees. We can then couple the algorithm running in the subcritical graph with one running in the collection of spanning trees of its components. This coupling will only differ in the components that are not trees which, as shown in Proposition \ref{prop:2coreChico}, contain (for every $\alpha > 0$) $\mathcal{O}_{\mathbb{P}}(n^\alpha)$ vertices if the limiting degree distribution has an exponentially thin tail. Therefore, both algorithms find an independent set of roughly the same size.

\subsection{Further characterisations of asymptotic optimality}

We now give explicit criteria to show that a given limiting distribution meets the hypothesis in Proposition \ref{prop:optimalidadCM}. We first state a criterion that can be easy to handle in practice. We then refine this criterion and give a general way of characterising the degree distributions in question.

\paragraph*{One application of the map $M_1$.}\label{sec:oneapplication}

To characterise when the degree-greedy algorithm does only select vertices of degree 1 or 0 until the remaining subgraph is subcritical, it will be useful to break the evolution of the process into discrete intervals of time for which we know for sure that the only vertices selected have these degrees. 

The key observation is that if the graph initially has $n p_1 + o(n)$ vertices of degree 1\footnote{For simplicity, vertices of degree 0 will be omitted from the analysis because, when selected, they block no vertices and then do not modify the number of unexplored vertices of other degrees. We can think that the algorithm selects them immediately after they are produced.}, then the degree-greedy will select vertices of degree 1 at least until an equivalent number of degree 1 vertices have been explored. Then we define the map $M_1^{(n)} : \mathbb{R}_{\geq0}^{\mathbb{N}} \longrightarrow \mathbb{R}_{\geq0}^{\mathbb{N}}$ as the map that when evaluated in a degree distribution of an $n$ sized graph $(p_k^{(n)})_{k\geq0}$ gives the resulting normalised (by $n$) degree measure of unexplored vertices after $n p_1^{(n)}$ vertices of degree 1 have been activated or blocked (and their neighbours blocked). This is in principle a stochastic map but, as we will prove, the degree-greedy exploration converges to a deterministic limit which implies that $M_1^{(n)}(\cdot)$ also behaves as a deterministic limit map $M_1(\cdot)$.

In this section, we determine when the degree distribution obtained after one application of the map $M_1^{(n)}$ is w.h.p. subcritical, and therefore the hypothesis of Proposition \ref{prop:optimalidadCM} are met. Our main result here is the following theorem:

\newpage

\begin{theorem}\label{thm:oneapplication}
 Given $G \sim \mbox{CM}_n(\bar d^{(n)})$ where the $(CA)$ holds towards a limiting degree distribution $(p_k)_{k\geq0}$ of mean $\lambda > 0$ and finite second moment. If
 
  \begin{equation}
     \tilde \nu : = G_D''(Q) / \lambda < 1
  \end{equation}

where $Q:=(1 - p_1/\lambda)$ and $G_D(z)$ is the generating function of the asymptotic degree r.v. $D$; then, (for every $\alpha > 0$) $\sigma_{DG}(G) = \alpha(G) + \mathcal{O}_\mathbb{P}(n^\alpha)$.
\end{theorem}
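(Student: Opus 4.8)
The plan is to reduce the theorem to Proposition \ref{prop:optimalidadCM} by showing that its hypothesis is met: namely, that w.h.p. the degree-greedy algorithm selects only degree-$1$ (and degree-$0$) vertices up until the remaining graph becomes subcritical. The first observation is that, by the key remark preceding the statement, as long as the remaining graph still contains vertices of degree $1$, the degree-greedy algorithm is forced to pick a degree-$1$ vertex (degree-$0$ vertices being dealt with for free). So I would run the algorithm through one full application of the map $M_1^{(n)}$, i.e.\ until roughly $n p_1 + o_{\mathbb P}(n)$ degree-$1$ vertices have been activated or blocked, and then argue that the degree distribution $M_1^{(n)}((p_k^{(n)})_k)$ of the remaining graph is w.h.p.\ subcritical under the stated hypothesis $\tilde\nu < 1$. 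If this holds, Proposition \ref{prop:optimalidadCM} applies verbatim — the exponential-tail assumption on $(p_k)_k$ is exactly what that proposition needs — and the conclusion $\sigma_{DG}(G) = \alpha(G) + \mathcal{O}_{\mathbb P}(n^\alpha)$ follows. A small point to check is that an exponential tail is preserved (or at least that a second moment survives) under one application of $M_1$, so that the criticality parameter $\nu$ of the remaining graph is well defined; this should follow from the explicit form of $M_1$.

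The technical core is therefore the computation of $M_1$ and the identification of its effect on the generating function. Here I would use the standard device for the Configuration Model under (CA): because the matching is exchangeable, I can reveal half-edges on the fly and describe the exploration as a Markovian dynamics on the empirical degree counts, then pass to the hydrodynamic (fluid) limit. Activating a degree-$1$ vertex $v$ removes $v$ and its unique neighbor $w$; $w$ itself had some degree $k$, and its other $k-1$ half-edges are matched to uniformly chosen half-edges belonging to still-unexplored vertices, whose degrees are thereby decremented by $1$ (and such vertices are removed from the active pool as "blocked"). Tracking the total half-edge count and the size-biasing induced by uniform half-edge selection, the net effect of processing all the initial degree-$1$ mass is a thinning of each half-edge: a given half-edge "survives" (its endpoint stays unexplored with that half-edge intact) with a probability that, in the limit, equals $Q = 1 - p_1/\lambda$. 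This is the classical generating-function identity: if each of the $D$ half-edges of a vertex is independently retained with probability $Q$, the retained-degree generating function is $G_D(Q + (1-Q)z)$, and one reads off the criticality parameter of the thinned graph as $\nu_{\text{new}} = \frac{\mathbb{E}[D_{\text{new}}(D_{\text{new}}-1)]}{\mathbb{E}[D_{\text{new}}]}$. A short computation gives $\mathbb{E}[D_{\text{new}}] = Q\,G_D'(1) = Q\lambda \cdot(\text{something})$ — more precisely one gets the numerator $\mathbb{E}[D_{\text{new}}(D_{\text{new}}-1)] = Q^2 G_D''(1)$ evaluated appropriately — leading to the stated quantity $\tilde\nu = G_D''(Q)/\lambda$. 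So the heart of the proof is carrying out this bookkeeping carefully and matching it to the formula in the statement.

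The main obstacle, and the step I would spend the most care on, is making the hydrodynamic limit rigorous: one must show that the empirical degree-count process, suitably rescaled by $n$, converges (uniformly on the relevant time interval, w.h.p.) to the deterministic trajectory whose endpoint is $M_1((p_k)_k)$, and that the stopping time "$n p_1 + o_{\mathbb P}(n)$ degree-$1$ vertices processed" is itself well-behaved (the fluid limit does not overshoot or create spurious degree-$1$ vertices that would extend the phase). This is a Wormald-type differential-equation-method argument — one writes the one-step drift of the degree counts, bounds the jumps and the error terms, and invokes a concentration/ODE-approximation theorem — complicated here by the fact, noted in the introduction, that unexplored degrees are not interchangeable in the obvious way and the martingale increments must be controlled using the exchangeability of the matching rather than any product structure. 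I would either cite the hydrodynamic limit from \cite{bermolen2017jamming} (adapting it from the greedy to the degree-$1$-restricted dynamics, which is if anything simpler since only one degree class is consumed) or reprove the needed estimate directly. Once the fluid limit and the subcriticality computation $\tilde\nu < 1 \Rightarrow \nu_{\text{new}} < 1$ are in hand, invoking Proposition \ref{prop:optimalidadCM} closes the argument.
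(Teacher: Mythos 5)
Your overall plan is the same as the paper's: reduce to Proposition~\ref{prop:optimalidadCM} by showing that after one application of $M_1$ the remaining graph is w.h.p.\ subcritical, and establish this via a hydrodynamic limit for the half-edge--matching dynamics. However, the key computation in your sketch is not correct as written, and the gap is not merely bookkeeping.

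You model the passage through $M_1$ as an independent per-half-edge thinning in which each half-edge survives with probability $Q$, and then read off the criticality of a binomially thinned degree distribution. That picture gives the standard percolation formula $\nu_{\mathrm{new}} = Q\,\nu = Q\,G_D''(1)/\lambda$, which is \emph{not} the quantity $\tilde\nu = G_D''(Q)/\lambda$ in the statement (already for Poisson these differ: $\lambda e^{-\lambda e^{-\lambda}}$ versus $(1-e^{-\lambda})\lambda$). What actually happens in the first phase of $M_1$ is a per-\emph{vertex} survival: a vertex of degree $i$ remains unexplored iff none of its $i$ half-edges is matched to a degree-$1$ vertex, which occurs asymptotically with probability $Q^i$, and if it survives it keeps \emph{all} $i$ of its half-edges. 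So the post--phase-$1$ unexplored degree distribution is proportional to $Q^i p_i$, not $\mathrm{Bin}(D,Q)$, and $\sum_{i\ge 2} i(i-1)Q^i p_i = Q^2 G_D''(Q)$ is where the evaluation at $Q$ (rather than at $1$) comes from. The second ingredient you are missing is the paper's structural observation that justifies stopping the criticality computation at the end of phase $1$: the second phase of $M_1$ (sequentially matching the accumulated blocked half-edges, treated as degree-$1$ stubs, and removing those edges) does \emph{not} change the sub/super-criticality of a configuration-model graph with finite second moment (cf.\ the percolation argument of Janson). Without that observation one would have to compute the full degree distribution after both phases --- which is the considerably more involved content of Theorem~\ref{thm:furtherapp} --- before one could even compute $\tilde\nu$. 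Finally, a small correction on the reduction to Proposition~\ref{prop:optimalidadCM}: Theorem~\ref{thm:oneapplication} does \emph{not} assume exponentially thin tails of $(p_k)$, only a finite second moment; the tails do not need to be ``preserved,'' they are \emph{created} by one application of $M_1$ (Lemma~\ref{lemma:colaexp}), which is what makes Proposition~\ref{prop:optimalidadCM} applicable to the remaining graph.
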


\paragraph*{Further applications of $M_1$.}\label{sec:manyapplications}

Theorem \ref{thm:oneapplication} establishes an asymptotic condition for the remaining graph obtained after one application of the map $M_1^{(n)}(\cdot)$ to be subcritical, and thus for the degree-greedy to be asymptotically optimal (in the sense that it finds, asymptotically, an independent set with the same proportion of vertices as a maximum one). Here we compute the asymptotic degree distribution of the remaining graph after one application of $M_1^{(n)}(\cdot)$ and in doing so we allow for the study of further applications of $M_1^{(n)}(\cdot)$. This can be used to establish more general conditions that determine the asymptotic optimality of the degree-greedy algorithm. For doing so, we determine an hydrodynamic limit for the second phase of $M_1^{(n)}(\cdot)$ and solve the obtained equations.

\begin{theorem}\label{thm:furtherapp} 
Define (for every $i, j \geq 1$) $\eta_j(i) := (-1)^{j-i} \binom{j}{i} \mathbb{1}_{i\leq j}$. Then, under the same assumptions of Theorem \ref{thm:oneapplication} and if we call $(a_j)_{j\in\mathbb{N}}$ the components of $(Q^k p_k \mathbb{1}_{\{k\geq2\}})_{k\in\mathbb{N}}$ in the base $\{\eta_j(\cdot)\}_{j\in\mathbb{N}}$, we have that the remaining graph after one application of the map is a Configuration Model graph with normalised degree measure given by

\begin{equation*}
    M^{(n)}_1\left(p^{(n)}_k\right)(i) \xrightarrow{\mathbb{P}}  M_1\left(p^{(n)}_k\right)(i) := \sum_{j \geq i} a_j (-1)^{j-i} \tilde Q^j \binom{j}{i}, \ \mbox{for } i \geq 1
\end{equation*}

where $\tilde Q := \sum_{i\geq2}i Q^i p_i / Q^2 \lambda$.
 
\end{theorem}

As mentioned above, this result gives a way of generalising the condition for asymptotic optimality of Theorem \ref{thm:oneapplication}:

\vspace{0.5cm}
\noindent
\textbf{General criterion for asymptotic optimality (GC):} \emph{given a limiting degree distribution, if after a finite number of applications of the map $M_1(\cdot)$ the degree distribution obtained is subcritical, then the degree-greedy is asymptotically optimal for a Configuration Model graph with that limiting distribution.}
\vspace{0.5cm}

The proof of this criterion is a direct consequence of Proposition \ref{prop:optimalidadCM}. Then, Theorem \ref{thm:furtherapp} can be used to verify it. In the next section we give numerical computations of distributions that meet the criterion.

\subsection{Erdös-Rényi graphs}

Here we analyse the special case of graphs with asymptotic Poisson degree distributions. They are of particular importance because, by \cite{kim2007poisson}, asymptotic results for them can be directly extended to Erdös-Rényi random graphs. We give here an alternative and simpler proof of the so-called ``$e$-phenomenon'', identified for matchings in \cite{karp1981maximum,aronson1998maximum}. We also give a more explicit characterisation of the asymptotic independence number than the one present in these works: we prove that both constants in the expression obtained in \cite{aronson1998maximum} are in fact the same and characterise them in terms of the Lambert function.

\begin{corollary}
    Let $G \sim \mbox{ER}_n(\lambda)$. If $\lambda < e$, then $\sigma_{DG}(G) = \sigma(G) + o_{\mathbb{P}}(n)$; otherwise, the selection sequence does not have the property $T_1$. Furthermore, in this case, $\alpha(G) = n (z(\lambda) + \frac{\lambda}{2} z(\lambda)^2) + o_{\mathbb{P}}(n)$, where $z(\lambda):= e^{-W(\lambda)}$ with $W(x)$ the Lambert function.
\end{corollary}

\begin{proof}
Because of the following equality for Bernstein polynomials \cite{lorentz2012bernstein},

\begin{equation*}
    \binom{n}{i} x^i (1-x)^{n-i} = \sum_{j=i}^n \binom{n}{j} x^j \eta_j(i),
\end{equation*}
the expansion in the base $\{\eta_k(\cdot)\}_{k\in\mathbb{N}}$ can be explicitly computed for binomial distributions, which in turn gives the transformation for Poisson distributions taking the usual limit. Using this, it can be easily seen that after $i$ applications of the map $M_1(\cdot)$ to a Poisson distribution, the resulting distribution is a linear combination of a Poisson distribution of mean $\mu_i$ and a term $\delta_1(\cdot)$ (we here ignore degree 0 vertices as they do not play a role in the dynamics), with respective coefficients $A_i$ and $B_i$. 

This transformation can be used to derive the following recursion relation for $\lambda_i$, $A_i$ and $B_i$:
\begin{equation*}
    \begin{cases}
        \mu_{i+1} = \tilde Q_i Q_i \mu_i, \\
        A_{i+1} = e^{-(1-Q_i)\mu_i} A_i, \\
        B_{i+1} = - A_i \tilde Q_i Q_i \mu_i e^{- \mu_i},
    \end{cases}
\end{equation*}
where $Q_i$ and $\tilde Q_i$ are the corresponding coefficients defined in Theorems \ref{thm:oneapplication} and \ref{thm:furtherapp} for the distribution after the $i$-th application. Writing explicitly the coefficients $Q_i$ and $\tilde Q_i$, one arrives at the 3-dimensional iterative map
\begin{equation}\label{eq:mappois}
    \begin{cases}
        \mu_{i+1} = \left( e^{-\frac{a_i \mu_i}{\lambda_i}} - e^{-\mu_i} \right) \frac{A_i \mu_i^2}{\lambda_i}, \\
        A_{i+1} = e^{-\frac{a_i \mu_i}{\lambda_i}} A_i, \\
        B_{i+1} = - e^{-\mu_i} A_i \mu_{i+1},
        \end{cases}
\end{equation}
where $a_i := A_i \mu_i e^{-\mu_i}+ B_i$ is the number of remaining degree 1 vertices and $\lambda_i := A_i \mu_i+ B_i$ is the number of remaining edges. Furthermore, the coordinate $B_i$ can be eliminated arriving at the closed 2-dimensional iterative map
\begin{equation*}
    \begin{cases}
        \mu_{i+1} =  \left( e^{-\frac{A_i e^{-\mu_i}-A_{i-1} e^{-\mu_{i-1}}}{A_i-A_{i-1} e^{-\mu_{i-1}}}\mu_i} - e^{-\mu_i} \right) \frac{A_i \mu_i}{A_i - A_{i-1}e^{-\mu_{i-1}}}, \\
        A_{i+1} =  e^{-\frac{A_i e^{-\mu_i}-A_{i-1} e^{-\mu_{i-1}}}{A_i-A_{i-1} e^{-\mu_{i-1}}}\mu_i} A_i.
        \end{cases}
\end{equation*}
Making the coordinate change $v_i := e^{-\mu_i}$ and $w_i := v_i^{A_i/(A_i-v_{i-1} A_{i-1})}$ the map can be rewritten as
\begin{equation}\label{eq:mapdef}
    \begin{cases}
        w_{i+1} = w_i^{\left(v_i w_i^{v_i-1}\right)}, \\
        v_{i+1} = w_{i+1} w_i^{-v_i},
        \end{cases}
\end{equation}
where the initial conditions for this discrete system are $v_0 = w_0 = e^{-\lambda}$. We now show that for initial conditions in the identity line $(v_0, v_0)$ this recursion can be explicitly solved. We first show by induction that in this case
\begin{equation*}
    w_i = h_{2i+1}(v_0),
\end{equation*}
\begin{equation*}
    v_i =  \frac{h_{2i+1}(v_0)}{h_{2i}(v_0)}, 
\end{equation*}
where we defined $h_m(a) := \underbrace {a^{a^{\cdot ^{\cdot ^{a}}}}} _{m}$; that is, $h_m(a)$ is the \emph{$m$-th tetration} of $a$. First note that by applying the map \eqref{eq:mapdef} one obtains that $w_1 = v_0^{(v_0^{v_0})} = h_{3}(v_0)$ and $v_1 = w_1 v_0^{-v_0} = h_{3}(v_0) h_{2}(v_0)^{-1}$, thus starting the induction. To advance the induction, suppose $w_i = h_{2i+1}(v_0)$ and $v_i = h_{2i+1}(v_0) \ h_{2i}(v_0)^{-1}$. Then, applying the map one obtains that
\begin{equation*}
    \begin{split}
    w_{i+1} & = h_{2i+1}(v_0)^{\left(h_{2i+1}(v_0) h_{2i}(v_0)^{-1} h_{2i+1}(v_0)^{\left(h_{2i+1}(v_0) h_{2i}(v_0)^{-1}-1\right)}\right)},\\ 
    & = h_{2i+1}(v_0)^{\left(h_{2i}(v_0)^{-1} h_{2i+1}(v_0)^{h_{2i+1}(v_0) h_{2i}(v_0)^{-1}}\right)}, \\
    & = \left( h_{2i+1}(v_0)^{h_{2i}(v_0)^{-1}} \right)^{\left( h_{2i+1}(v_0)^{h_{2i}(v_0)^{-1}} \right)^{h_{2i+1}(v_0)}}, \\
    & = v_0^{\left(v_0^{h_{2i+1}(v_0)}\right)} = h_{2(i+1)+1}(v_0),
    \end{split}
\end{equation*}
where we used that $h_{2i+1}(v_0)^{h_{2i}(v_0)^{-1}} = v_0$. In an analogous way, we can show that $v_{i+1} = \frac{h_{2(i+1)+1}(v_0)}{h_{2(i+1)}(v_0)}$, advancing the induction.

By Theorem 5 in \cite{barrow1936infinite}, $h_{i}(v_0)$ converges if and only if $v_0 \in (e^{-e}, e^{1/e})$. And in these cases, because of \eqref{eq:mapdef}, $v_i$ will converge to $1$ as $i \rightarrow \infty$. Undoing the coordinate change, this implies that if $ \lambda < e$, we will have that $\mu_i \xrightarrow{i\rightarrow\infty} 0$. This in turn proves that the number of remaining vertices in the graph $(1- e^{-\mu_i}) A_i + B_i \xrightarrow{i\rightarrow\infty} 0$ which means that the graph vanishes asymptotically, showing that under these conditions the degree-greedy algorithm is asymptotically optimal. Furthermore, also by Theorem 5 in \cite{barrow1936infinite}, if $\lambda > e$, $h_{2i}(v_0)$ and $h_{2i+1}(v_0)$ will converge to different limits yielding $v_i \xrightarrow{i\rightarrow\infty} t < 1$. That is, in these cases, the selection sequence defined by the degree-greedy algorithm will not (w.h.p.) have the property $T_1$.

Moreover, making use of Proposition \ref{prop:independence} from next section, we can compute the independence number. Because in this case the number of degree 2 or greater vertices is given by a Poisson density of mean $\mu_i$ multiplied by $A_i$,

\begin{equation*}
    \sum_{j\geq2} (1-Q_i^j) A_i \frac{\mu_i^j}{j!}e^{-\mu_i} = A_i(1-e^{-\mu_i(1-Q_i)}) - A_i \mu_i e^{-\mu_i} (1 - Q_i)
\end{equation*}

Also, using the explicit solution obtained for the map \eqref{eq:mappois}, it can be shown that 
\begin{itemize}
    \item $\ \ A_i = h_{2i}(e^{-\lambda})$
    \item $\ \ Q_i = \frac{( h_{2i}(e^{-\lambda}) - h_{2i+1}(e^{-\lambda}) )}{( h_{2i}(e^{-\lambda}) - h_{2i-1}(e^{-\lambda}) )}$
    \item $\ \ \mu_i = \lambda ( h_{2i}(e^{-\lambda}) - h_{2i-1}(e^{-\lambda}) )$
    \item $\ \ a_i = \lambda ( h_{2i+1}(e^{-\lambda}) - h_{2i-1}(e^{-\lambda}) ) ( h_{2i}(e^{-\lambda}) - h_{2i-1}(e^{-\lambda}) )$
\end{itemize}

Where $a_i$ is the normalized remaining number of degree 1 vertices. Using this, Proposition \ref{prop:independence} gives

\begin{equation*}
    \begin{split}
        \alpha(G) & = n \left(1 - \sum_{i\geq0} h_{2i}(e^{-\lambda}) -h_{2i+2}(e^{-\lambda})  + \sum_{i\geq0} \frac{\lambda}{2} h_{2i-1}^2(e^{-\lambda})  - \frac{\lambda}{2} h_{2i+1}^2(e^{-\lambda})  \right) + o_{\mathbb{P}}(n) \\
        & = n ( z(\lambda) + \lambda/2 z(\lambda)^2 ) + o_{\mathbb{P}}(n) 
    \end{split}
\end{equation*}

Where we used that (for $m\in\mathbb{N}$ and $a>0$) $h_{-m}(a) = 0$, $h_0(a) = 1$ and that $\lim_{i\rightarrow\infty} h_i(a) = e^{-W(-\log(a))}$.

\end{proof}

\subsection{Other applications}\label{sec:exandapp}

We now apply our results to power-law distributions, which give rise to scale-free networks. Furthermore, we explain how to compute the independence ratios for this distributions and we prove upper bounds for the independence ratios of distributions for which these theorems cannot be used.

\paragraph{Power-law distributions.}

Here we look at the case where the degree distribution obeys a power law of parameter $\alpha > 3$. Because the generating function of a power law distribution $p_k = C_\alpha k^{-\alpha}$ is given by $C_\alpha \mbox{Li}_\alpha(z)$ (where $\mbox{Li}_\alpha(z)$ is the polylogarithm function of order $\alpha$):

\begin{equation*}
 \tilde \nu(\alpha) = \frac{(C_\alpha \mbox{Li}_\alpha(z))''|_{Q(\alpha)}}{\sum_{i\geq1} i C_\alpha i^{-\alpha}} = \frac{\mbox{Li}_{\alpha-2}(1-\zeta(\alpha-1)^{-1})-\mbox{Li}_{\alpha-1}(1-\zeta(\alpha-1)^{-1})}{\zeta(\alpha-1)}
\end{equation*}

Where $\zeta(z)$ is the Riemann zeta function and in the last line we used that $Q(\alpha) = 1 - \zeta(\alpha-1)^{-1}$ and that $\mbox{Li}_{\alpha}(z)' = \mbox{Li}_{\alpha-1}(z)/z$. This last expression can be seen to be smaller than $1$ for every $\alpha > 3$; which means that for every power law distribution of finite second moment, the degree-greedy algorithm is asymptotically optimal. In particular, whenever it has finite second moment and $\zeta(\alpha-2) > 2 \zeta(\alpha-1)$ (or $3 < \alpha \lesssim 3,478$), this distribution will be supercritial but nevertheless the degree-greedy will be asymptotically optimal.

\paragraph{Computing independence ratios.}
As a consequence of Theorem \ref{thm:furtherapp}, whenever a sequence of graphs is under its hypothesis and after a finite number of applications of the map $M_1(\cdot)$ a subcritical distribution is obtained, the asymptotic independence number can be obtained by computing the number of vertices in the independent set constructed by the degree-greedy algorithm.

\begin{proposition}\label{prop:independence}
 Given $G \sim \mbox{CM}_n(\bar d^{(n)})$ where the $(CA)$ holds towards a limiting degree distribution $(p_k)_{k\geq0}$. Then, if the criterion $(GC)$ holds, we will have that
\begin{equation}\label{eq:indnumestimation}
    \alpha(G) = n \left( 1 - \sum_{i=1}^\infty  \frac{\mu^{(i)}(1) (1-Q_i)}{2} + \sum_{j=2}^\infty (1-Q_i^j) \mu^{(i)}(j)\right) + o_\mathbb{P}(n).
\end{equation}

Where $\mu^{(i)}(j)$ is the remaining number of degree $j$ vertices over $n$ and $Q_i$ is the corresponding parameter defined in Theorem \ref{thm:oneapplication}, after $i$ applications of the map $M_1(\cdot)$ over the limiting degree distribution of the graph sequence.
\end{proposition}

\begin{proof}
    During the $i$-th application of the map $M_1(\cdot)$, the number of vertices of degree $j$ (where $j\geq2$) that gets blocked\footnote{Because, as we will see in Section \ref{sec:proof22}, $Q_i$ is the probability of a single half-edge to be connected to an activated vertex.} is $(1-Q_i^j)\mu^{(i)}(j)$. Moreover, for each pair of degree $1$ vertices that get connected to another degree $1$ vertex, one gets blocked. The number of degree $1$ vertices that get blocked is then $(1-Q_i)\mu^{(i)}(1)/2$. So, the size of the independent set obtained will be 1 minus the total number of blocked vertices. Also, because we are assuming we are in the condition of asymptotic optimality of the algorithm, the size of the independent set obtained will be the independence number of the graph.
\end{proof}

\paragraph{Upper bounds for independence ratios.}
For sequences of graphs where the asymptotic optimality condition does not hold, one can nevertheless use Theorem \ref{thm:oneapplication} to construct upper bounds on the limiting independence number.

\begin{proposition}
 Given $G \sim \mbox{CM}_n(\bar d^{(n)})$ where the $(CA)$ holds towards a limiting degree distribution $(p_k)_{k\geq0}$; if we define $c_1 := \inf \{ a > 0 : G''_D(1- (a+p_1)/(a+\lambda)) < 1 \}$ and (for $k\geq0$) $p_k^*:= (c_1 \delta_{1k} + p_k)/(c_1+1)$, we will have that
 
 \begin{equation*}
     \alpha(G) \leq n \left( 1 - \sum_{i=1}^\infty  \frac{\mu^{*(i)}(1) (1-Q^*_i)}{2} + \sum_{j=2}^\infty (1-Q_i^{*j}) \mu^{*(i)}(j)\right) + o_\mathbb{P}(n).
 \end{equation*}
 
 Where $\mu^{*(i)}(j)$ is the remaining number of degree $j$ vertices over $n$ and $Q^*_i$ is the corresponding parameter defined in Theorem \ref{thm:oneapplication}, after $i$ applications of the map $M_1(\cdot)$ over the degree distribution $(p^*_k)_{k\geq0}$.
\end{proposition}

\begin{proof}
    Define a new graph sequence $G^* \sim \mbox{CM}_n(\bar d^{*(n)})$ where the $(CA)$ holds towards the limiting degree distribution $(p^*_k)_{k\geq0}$. This sequence can be thought as the graphs formed by changing the degrees of a certain proportion of vertices in the original graphs $G$ to 1 so that the inequality in Theorem \ref{thm:oneapplication} holds. The resulting graphs will be distributed as the original ones with some of its edges removed. Then, because the independence number is monotonic on edge removal, we will have that $\alpha(G) \leq \alpha(G^*)$. Finally, using equation \eqref{eq:indnumestimation}, $\alpha(G^*)$ can be computed and thus the desired upper bound is obtained.
\end{proof}

\section{Proofs}
In this Section, we provide proofs for all our results.

\subsection{Proof of Lemma \ref{lemma:optimal} on deterministic graphs}

We will prove Lemma \ref{lemma:optimal} by induction on the graph size $|G|$. For $|G|=1$ and $|G|=2$ the statement is trivially true. 

Now assume that it holds for $|G|=n$, we will show that it is thus also true for $|G|=n+1$. Suppose that it is not the case that $|W| = \alpha(G)$, then there is an independent set $A$ such that $|W|<|A|=\alpha(G)$. Calling $W'=(w_i)_{i=2}^{|W|}$, by the induction hypothesis (because $W'$ has the $T_1$ property in $G_1$) we know that $|W'|=\alpha(G_1)$. 

Calling $n_1$ the vertex adjacent to $w_1$,\footnote{Here we will assume that $d_{w_1}=1$. The proof is very similar in the case where $w_1$ is an isolated vertex.} because the independent set $A$ is of maximum size, it has to contain either $w_1$ or $n_1$ (if not, one could construct an even larger independent set by adding $w_1$ to the vertices in $A$, which would be a contradiction). This implies that
\begin{equation*}
    |A|= |A \backslash \{w_1,n_1\}| + 1 > |W| = |W'| + 1,
\end{equation*}
which means that $|A \backslash \{w_1,n_1\}| > |W'|$ which is a contradiction because $A \backslash \{w_1,n_1\}$ defines an independent set of $G_1$ and by hypothesis we have that the independent set defined by $W'$ is an independent set of $G_1$ of maximum size. We then have that $W$ defines an independent set of maximum size of $G$, advancing in this way the induction.

\begin{corollary}\label{coro:arbol}
 If $H$ is a collection of trees, then the degree-greedy algorithm run on $H$ finds a.s. a maximum independent set.
\end{corollary}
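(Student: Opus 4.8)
The plan is to show that when the degree-greedy algorithm is run on a collection of trees $H$, the selection sequence it produces automatically has the property $T_1$; the conclusion then follows immediately from Lemma \ref{lemma:optimal}, since $|W_{DG}| = \alpha(H)$. So the task reduces to verifying that at every step the minimum degree in the remaining graph is $0$ or $1$.

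The key observation is that every remaining subgraph $G_{i-1}$ (in the sense of Definition \ref{def:remaining}) is itself a collection of trees. This is because deleting a vertex together with some of its neighbors from a forest leaves a forest: removing vertices (and their incident edges) from an acyclic graph cannot create a cycle, so each connected component of $G_{i-1}$ is again a tree. A finite forest that is nonempty always contains a vertex of degree $0$ or $1$: indeed, any finite tree on at least two vertices has a leaf (a vertex of degree $1$), and a tree on one vertex is an isolated vertex of degree $0$. Hence the minimum degree over the remaining graph is always $\le 1$, so the degree-greedy algorithm — which by construction selects a vertex of minimum degree in $G_{i-1}$ — selects at step $i$ a vertex of degree $0$ or $1$. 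This is precisely the property $T_1$.

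Since $H$ is finite, the degree-greedy algorithm terminates after finitely many steps with $G_m = \emptyset$, producing a selection sequence $W_{DG}$ with property $T_1$, and Lemma \ref{lemma:optimal} gives $|W_{DG}| = \alpha(H)$ deterministically (hence a.s., irrespective of the uniform tie-breaking choices made by the algorithm). I do not anticipate a serious obstacle here: the only point requiring a line of care is the elementary claim that forests are closed under the "delete a vertex and some neighbors" operation and that nonempty finite forests have a vertex of degree at most $1$, both of which are standard. One could phrase the argument slightly more carefully by noting it suffices to treat a single tree component, since the algorithm run on a disjoint union decomposes into independent runs on each component and $\alpha$ is additive over components.
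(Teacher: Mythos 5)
Your proof is correct and follows essentially the same route as the paper: show that the remaining graph stays a forest at every step, hence always has a vertex of degree $0$ or $1$, so the degree-greedy selection sequence has property $T_1$, and conclude by Lemma~\ref{lemma:optimal}. You spell out the inductive forest-preservation argument in more detail than the paper's terse version, but the underlying idea is identical.
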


\begin{proof}
 Because $H$ is a collection of trees, for every leaf removed by the degree-greedy algorithm, further leafs (or isolated vertices) will be created. The algorithm will have the property $T_1$ as it will only select leafs (or isolated vertices). The conclusion is then reached by Lemma \ref{lemma:optimal}.
\end{proof}

\subsection{Proof of Proposition \ref{prop:optimalidadCM}}

We first deal with the case where $G$ is a subcritical graph.

To show the condition for optimality of Proposition \ref{prop:optimalidadCM}, we will study the number of times a breadth-first exploration process of the components joins two already explored vertices (for more details on the breadth-first exploration of components see \cite{van2016random}) forming a loop. Here we will call $N_u$ the number of times this happens during the exploration of the connected component associated to the vertex $u \in V$ and $N$ the total number of times it happens in the exploration of all the components in the graph. For every component $C(u)$ we will have that if $N_u = 0$, then the component is exactly a tree (as no loops are formed during the exploration process). The idea will be to prove that in a subcritical graph with a degree distribution with an exponentially thin tail, almost every vertex is in a component that is a tree. Note that this is not a consequence of the results in \cite{janson2007simple}, as it is not enough to show that the 2-core\footnote{The 2-core of a graph is the maximum subgraph with minimum degree 2.} is $o_\mathbb{P}(n)$ to conclude this.
\\\\
By calling $v(i)$ the vertex visited during the $i$-th step of the exploration process of the connected component of vertex $1$ and $T$ the stopping time in which the process finishes, we can write
\begin{equation}
 N_1 = \sum_{i=1}^{T} \sum_{j>i}^{T}\mathbb{1}_{\{v(i)=v(j)\}}
\end{equation}

We can now state the following bound on the conditional expectation of $N_1$ that we will use to prove our lemma.

\begin{lemma}\label{lemma:cotacond}
Let $G \sim \mbox{CM}_n(\overline{d})$ where the $(CA)$ holds towards a limiting degree distribution $(p_k)_{k\geq0}$ of mean $\lambda > 0$ and criticality parameter $0<\nu<\infty$. If $C(1)$ is the connected component of vertex $1$, then conditioning on the number $T$ of edges in the component, we have that

\begin{equation*}
    \mathbb{E}\left(N_1|T\right) \leq \frac{(\nu^{(n)} + 1) n \lambda^{(n)} T(T-1)}{2 (n \lambda^{(n)} -T)^2}
\end{equation*}

where $\lambda^{(n)}:= \sum_{u\in V^{(n)}} d_u^{(n)}/n \xrightarrow{n\rightarrow\infty}\lambda$ and $\nu^{(n)} := \sum_{u\in V^{(n)}} d_u^{(n)}(d_u^{(n)}-1) / n \lambda^{(n)}\xrightarrow{n\rightarrow\infty}\nu$.
\end{lemma}

\begin{proof}
The bound is derived making use of the corresponding bound for the probability that (conditional on $T$) a particular vertex is visited by the exploration process at time $i$. So, firstly we want to show that for $u \in V^{(n)}$ and $i \leq T$

\begin{equation}\label{eq:desprobcond}
    \mathbb{P}(v(i)=u|T) \leq \frac{d_u^{(n)}}{n\lambda^{(n)}-T}
\end{equation}

This can be shown using that the random variable $T$ is a.s. positive and bounded by $n^2$. Then for any set $A$ $\sigma(T)$-measurable we have that
\begin{align*}
    \mathbb{E}(\mathbb{1}_A \mathbb{1}_{\{ v(i) = u \}}) & = \mathbb{P}(A, v(i)=u) \\ 
    & = \sum_{t \leq n^2} \mathbb{P}(A|v(i)=u,T=t) \mathbb{P}(v(i)=u|T=t) \mathbb{P}(T=t)\\
    & \leq \sum_{t \leq n^2} \mathbb{1}_A(t) \frac{d_u^{(n)}}{n\lambda^{(n)}-t}\mathbb{P}(T=t) = \mathbb{E}\left(\mathbb{1}_A  \frac{d_u^{(n)}}{n\lambda^{(n)}-T}\right)
\end{align*}

In the forth line we have used that $\mathbb{P}(A|v(i)=u,T=t) = \mathbb{1}_A(t)$ because $A$ is $\sigma(T)$-measurable and that because the matching is done uniformly between all the unmatched half-edges at time $i$ we have that $\mathbb{P}(v(i)=u|T=t) = \frac{d_u(i)^{(n)}}{n\lambda^{(n)}-(i-1)} \leq \frac{d_u^{(n)}}{n\lambda^{(n)}-t}$ (with $d_u(i)^{(n)}$ as the number of unmatched half-edges of $u$ by the step $i$ of the exploration).

Now using that
\begin{equation*}
        N_1 = \sum_{i=1}^{T} \sum_{j>i}^{T}\mathbb{1}_{\{v(i)=v(j)\}}= \sum_{i=1}^{T} \sum_{j>i}^{T} \sum_{u\in V} \mathbb{1}_{\{v(i)=u\}} \mathbb{1}_{\{v(j)=u\}},
\end{equation*}

taking conditional expectation on $T$ over both sides and using this inequality, we get that

\begin{align*}
    \mathbb{E}\left( N_1 | T\right) & = \sum_{i=1}^{T} \sum_{j>i}^{T} \sum_{u\in V^{(n)}} \mathbb{P}(v(i)=u,v(j)=u|T)  \leq \sum_{i=1}^{T} \sum_{j>i}^{T} \sum_{u\in V^{(n)}} \left(\frac{d_u^{(n)}}{n\lambda^{(n)}-T}\right)^2\\
                                                           & \leq \frac{n\lambda^{(n)}}{(n\lambda^{(n)}-T)^2} \left( \sum_{i=1}^{T} \sum_{j>i}^{T} 1 \right) \left( \sum_{u\in V^{(n)}} d_u^{(n)} \frac{d_u^{(n)}}{n\lambda^{(n)}}\right)  = \frac{(\nu^{(n)} + 1) n\lambda^{(n)} T(T-1)}{2 (n\lambda^{(n)} -T)^2}\end{align*}

Where for the first inequality we used that for all $i < j$
\begin{align*}
    \mathbb{P}(v(i)=u,v(j)=u|T) & = \mathbb{P}(v(j)=u|v(i)=u,T) \mathbb{P}(v(i)=u|T) \\ & \leq \mathbb{P}(v(j)=k|T) \mathbb{P}(v(i)=k|T)
\end{align*}\end{proof}

By means of this lemma, if we denote by $B$ the number of \emph{bad vertices} that are in components that are not trees, we can prove that (under certain assumptions) it grows slower than any positive power of the graph size $n$.

\begin{proposition}\label{prop:2coreChico}
Let $G \sim \mbox{CM}_n(\overline{d})$ where the $(CA)$ holds towards a limiting degree distribution $(p_k)_{k\geq0}$ of mean $0 < \lambda < \infty$, criticality parameter $0 < \nu < 1$ and such that there exists $\gamma >0$ where $p_k = \mathcal{O}(e^{-\gamma k})$. Then, for every $0<\alpha<1$ we have that $B = \mathcal{O}_{\mathbb{P}}(n^{\alpha})$.
\end{proposition}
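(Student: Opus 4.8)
The plan is to show $\mathbb{E}(B)=\mathcal{O}\big((\log n)^{8}\big)$; Markov's inequality then gives $\mathbb{P}(B\ge n^{\alpha})\le n^{-\alpha}\mathbb{E}(B)\to 0$ for every $\alpha>0$, which is in fact stronger than the claimed $\mathcal{O}_{\mathbb{P}}(n^{\alpha})$ bound. Writing $B=\sum_{u\in V}\mathbb{1}_{\{C(u)\text{ not a tree}\}}$ and noting that $C(u)$ fails to be a tree exactly when $N(C(u))\ge1$, exchangeability of the vertices in the i.i.d.-degree model gives $\mathbb{E}(B)=n\,\mathbb{P}\big(N(C(1))\ge1\big)\le n\,\mathbb{E}\big(N(C(1))\big)$, the last inequality being Markov's inequality for the $\mathbb{N}_0$-valued variable $N(C(1))$. (For a fixed degree sequence one instead sums the analogous bounds $\mathbb{P}(N(C(u))\ge1)\le\mathbb{E}(N(C(u)))$ over $u$.) It therefore suffices to bound $\mathbb{E}\big(N(C(1))\big)$, and for this Lemma~\ref{lemma:cotacond} is the main tool.

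Introduce the global event $\mathcal{E}_{n}:=\{\,l_{n}\ge n\lambda/4\,\}\cap\{\,\max_{u\in V}d_{u}\le(\log n)^{2}\,\}$. Since $p_{k}=\mathcal{O}(e^{-\gamma k})$ implies that $D$ has finite exponential moments, a Chernoff bound gives $\mathbb{P}(l_{n}<n\lambda/4)\le e^{-cn}$, while a union bound gives $\mathbb{P}(\max_{u}d_{u}>(\log n)^{2})\le Cn\,e^{-\gamma(\log n)^{2}}$; hence $\mathbb{P}(\mathcal{E}_{n}^{c})$ is smaller than every negative power of $n$. On $\mathcal{E}_{n}^{c}$ I use the crude bound $N(C(1))\le\binom{T}{2}\le l_{n}^{2}$ together with $\mathbb{E}(l_{n}^{4})=\mathcal{O}(n^{4})$ (finite fourth moment of $D$) and Cauchy--Schwarz to obtain $n\,\mathbb{E}\big(\mathbb{1}_{\mathcal{E}_{n}^{c}}N(C(1))\big)=o(1)$.

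On $\mathcal{E}_{n}$ I split according to the size of $C(1)$; fix a constant $\delta\in(0,\lambda/8)$. If $T>\delta n$, then on $\mathcal{E}_{n}$ one has $|C(1)|\ge 2\delta n/(\log n)^{2}$, an event of probability at most $Ce^{-c'n/(\log n)^{2}}$, because the breadth-first exploration of $C(1)$ in a subcritical configuration model with exponentially light degree tail is stochastically dominated by the total progeny of a subcritical Galton--Watson process (offspring law the size-biased $D$ minus $1$, of mean $\nu<1$ and with finite exponential moments); combined with $N(C(1))\le l_{n}^{2}\le\tfrac14 n^{2}(\log n)^{4}$ this contributes $o(1)$ after multiplying by $n$. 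If instead $T\le\delta n$, then $l_{n}-T\ge n\lambda/8$, and since $(\nu_{n}+1)l_{n}=\sum_{u}d_{u}^{2}-l_{n}\le\sum_{u}d_{u}^{2}\le n(\log n)^{4}$ on $\mathcal{E}_{n}$, Lemma~\ref{lemma:cotacond} (applied after conditioning on the degree sequence and on $T$) yields
\[
  \mathbb{E}\big(N(C(1))\mid \bar d^{(n)},T\big)\,\mathbb{1}_{\mathcal{E}_{n}\cap\{T\le\delta n\}}\ \le\ \frac{32\,(\log n)^{4}}{\lambda^{2}n}\,T^{2}\,\mathbb{1}_{\mathcal{E}_{n}}.
\]
Finally, $T=\tfrac12\sum_{v\in C(1)}d_{v}\le\tfrac12|C(1)|(\log n)^{2}$ on $\mathcal{E}_{n}$, so $\mathbb{E}(\mathbb{1}_{\mathcal{E}_{n}}T^{2})\le\tfrac14(\log n)^{4}\,\mathbb{E}(|C(1)|^{2})$, and $\mathbb{E}(|C(1)|^{2})=\mathcal{O}(1)$ uniformly in $n$ by the same Galton--Watson domination. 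Collecting the three contributions gives $\mathbb{E}(B)=\mathcal{O}\big((\log n)^{8}\big)$, which completes the argument.

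The main obstacle is the uniform-in-$n$ control of the size of $C(1)$: one must know that in the subcritical regime $\nu<1$ the component of a uniformly chosen vertex has an exponentially decaying size distribution with constants independent of $n$. This is precisely where the hypothesis $p_{k}=\mathcal{O}(e^{-\gamma k})$ is used --- it guarantees that the (size-biased) offspring law of the dominating branching process has finite exponential moments, hence so does its total progeny. The remainder is routine truncation together with Markov and Chernoff bookkeeping.
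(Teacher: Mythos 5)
Your argument is correct in substance, but it takes a genuinely different route from the paper's, and in fact proves something stronger.

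The paper never tries to show that $\mathbb{E}(B)$ is small. It instead uses the pointwise inequality $B\le N\cdot|C_{\max}|$ (there are at most $N$ non-tree components, each of size at most $|C_{\max}|$), proves separately that $\mathbb{P}(N>n^{\alpha/2})$ is small via Markov on $\mathbb{E}(N)=n\,\mathbb{E}(N(C(1)))=\mathcal{O}(n^{2\beta})$ and that $\mathbb{P}(|C_{\max}|>n^{\alpha/2})$ is small by invoking the tail estimate for the largest subcritical component from Janson (2008), and then unions the two events. You replace this by the cleaner observation $\mathbb{E}(B)=n\,\mathbb{P}(N(C(1))\ge1)\le n\,\mathbb{E}(N(C(1)))$ --- valid precisely because $\{C(u)\text{ is not a tree}\}=\{N(C(u))\ge1\}$ --- which eliminates the need for a separate $|C_{\max}|$ bound. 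To make this work, however, you must bound $\mathbb{E}(N(C(1)))$ by $o(n^{\alpha-1})$ rather than the paper's $\mathcal{O}(n^{2\beta-1})$, and you achieve this by a different truncation: instead of capping $T$ at a polynomial scale $n^{\beta}$ and using the crude worst-case $T^{2}\le n^{2\beta}$ inside Lemma~\ref{lemma:cotacond}, you cap the maximum degree at $(\log n)^{2}$ (which is free because of the exponential tail) and genuinely compute $\mathbb{E}(T^{2})=\mathcal{O}((\log n)^{4})$ from the uniform moment bound $\mathbb{E}(|C(1)|^{2})=\mathcal{O}(1)$ furnished by the Galton--Watson domination. The net gain is a stronger conclusion $\mathbb{E}(B)=\mathcal{O}((\log n)^{8})$, i.e.\ $B=\mathcal{O}_{\mathbb{P}}((\log n)^{8})$ rather than merely $\mathcal{O}_{\mathbb{P}}(n^{\alpha})$.

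Two small imprecisions, neither of which damages the argument. First, the coupling with the subcritical branching process in the paper is only asserted for the first $\sqrt n$ exploration steps, so the probability $\mathbb{P}(T>\delta n)$ should be bounded by $\mathbb{P}(\tilde T>\sqrt n)=\mathcal{O}(e^{-c\sqrt n})$ rather than $\mathcal{O}(e^{-c'n/(\log n)^{2}})$; this is still super-polynomially small, so your estimate survives. Second, $\mathbb{E}(|C(1)|^{2})=\mathcal{O}(1)$ uniformly in $n$ likewise needs to be routed through that $\sqrt n$ truncation: on $\{\tilde T\le\sqrt n\}$ one has $|C(1)|\le T+1\le\tilde T+1$, while on $\{\tilde T>\sqrt n\}$ one bounds $|C(1)|\le n$ and uses $n^{2}\,\mathbb{P}(\tilde T>\sqrt n)\to0$; together these give $\mathbb{E}(|C(1)|^{2})\le\mathbb{E}((\tilde T+1)^{2})+o(1)=\mathcal{O}(1)$, which is what you needed. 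With these two clarifications the proof is complete.
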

\begin{proof}
The proof will apply Lemma \ref{lemma:cotacond} and a coupling, presented by S. Janson et al. in \cite{janson2008largest}, which we describe now.
\\\\
Suppose we are in the step $i \leq \sqrt{n}$ (the exact power of $n$ is in fact irrelevant, it only needs to be $o(n)$ and of an order smaller than the components sizes) of the exploration process. Then the probability of finding a vertex of degree $k$ (excluding the vertices already explored) will be 
\begin{equation*}
    \frac{nk p_k^{(n)}}{ n \lambda^{(n)}-  \mathcal{O}(\sqrt{n})} = \frac{k p_k^{(n)}}{ \lambda^{(n)}  } ( 1 + \mathcal{O}(\sqrt{n}) )
\end{equation*}

Defining a random variable $X$ distributed according to

\begin{equation}\label{eq:treecoupling}
    \mathbb{P}(X \geq x ) = \mbox{min}\left( 1, \frac{\nu'}{\nu} \sum_{k \geq x} \frac{(k+1) p_{k+1}}{\lambda} \right) 
\end{equation}

For $\nu' = \nu + \epsilon'$ fixed and with $0 < \epsilon' < 1 - \nu$, then for every $n$ large enough $X$ stochastically dominates the step size of the random walk associated to the exploration process. As a consequence, we will have that if $\tilde T$ is the hitting time of $0$ of a random walk starting from $1$ and with step size $X$, $T \leq \tilde T$ a.s. whenever $\tilde T \leq \sqrt{n}$. The variable $\tilde T$ can also be thought of as the total progeny of a Galton-Watson process with progeny law given by i.i.d. copies of $X$. By summing over the expression in (\ref{eq:treecoupling}) we get that $\mathbb{E}(X) \leq \frac{\nu'}{\nu}\nu <1$, which means that the associated Galton-Watson process is subcritical.

Making use of this coupling, we can now prove the proposition. Taking expectation to $N_1$ and separating according to different values of $T$ we obtain that
\begin{equation*}
\mathbb{E}(N_1) = \mathbb{E}\left(N_1 \mathbb{1}_{\{T\leq n^\beta\}}\right) + \mathbb{E}\left(N_1 \mathbb{1}_{\{T > n^\beta\}} \right)
\end{equation*}
Where $0<\beta<1/2$ (its exact value will be fixed later on). By the upper bound on the conditional expectation in Lemma \ref{lemma:cotacond} we get that the first term is

\begin{equation*}
\mathbb{E}\left(N_1 \mathbb{1}_{\{T\leq n^\beta\}}\right) \leq \frac{(\nu^{(n)} + 1) n\lambda^{(n)} n^\beta(n^\beta-1)}{2 (n\lambda^{(n)} -n^\beta)^2} \mathbb{P}(T\leq n^\beta) \leq \frac{(\nu^{(n)} + 1) n\lambda^{(n)} n^\beta(n^\beta-1)}{2 (n\lambda^{(n)} -n^\beta)^2}
\end{equation*}

Because by hypothesis $\lambda^{(n)} \rightarrow \lambda$ and $\nu^{(n)} \rightarrow \nu$, the right hand is then $\mathcal{O}(n^{2\beta-1})$.

For the second term, we will again use that $N_1 \leq T$ a.s. to obtain that
\begin{equation*}
  \mathbb{E}\left(N_1 \mathbb{1}_{\{T > n^\beta\}} \right) \leq \mathbb{E}\left(T \mathbb{1}_{\{T > n^\beta\}} \right) \leq n \mathbb{P}(T > n^\beta)
\end{equation*}

Because of the coupling described above, we will also have that $\mathbb{P}(T > n^\beta) \leq \mathbb{P}(\tilde T > n^\beta)$ (because $n^\beta < \sqrt{n}$), where $\tilde T$ is the total progeny of a subcritical Galton-Watson process with progeny $X$, where $X$ is distributed according to \eqref{eq:treecoupling}. By hypothesis, for some $\gamma' > 0$, $\mathbb{P}(X = k) = \mathcal{O}(e^{-\gamma' k})$. Then, $X$ will have finite exponential moment and by Theorem 2.1 in \cite{nakayama2004finite}, so will $\tilde T$. Therefore, by Markov's inequality, we will have that for every $\theta > 0$

\begin{equation*}
    \mathbb{P}( \tilde T > n^\beta ) \leq \frac{\mathbb{E}( \tilde T^\theta )}{n^{\beta \theta}} = \mathcal{O}(n^{-\beta \theta})
\end{equation*}

Putting the three bounds together we obtain that (for $\theta$ large enough)

\begin{equation*}
    \mathbb{E}(N_1) = \mathcal{O}(n^{2\beta-1}) +\mathcal{O}(n^{1-\beta\theta}) = \mathcal{O}(n^{2\beta-1}) 
\end{equation*}

Now, fix $0< \alpha < 1$. Using this and Markov's inequality, we obtain that

\begin{equation*}
\mathbb{P}\left(N > n^{\alpha/2} \right) \leq \frac{\mathbb{E}(N)}{n^{\alpha/2}} \leq \frac{\mathbb{E}(\sum_{u\in V} N_u)}{n^{\alpha/2}} = \frac{n \mathbb{E}(N_1)}{n^{\alpha/2}} = \mathcal{O}(n^{-\delta_1})
\end{equation*}

Where $\delta_1 := \alpha/2 - 2 \beta$. By taking $\beta < \alpha / 4$ we have that $\delta_1 > 0$.

On the other hand, if we call $C_{max}$ the largest component of the graph, by Theorem 1.1 in \cite{janson2008largest}\footnote{Here we use that the tails of the degree distribution are exponentially thin and therefore $\mathcal{O}(k^{-\gamma''})$ for every $\gamma'' > 0$.} we also have that 
\begin{equation*}
 \mathbb{P}(|C_{max}| > n^{\alpha/2}) = \mathcal{O}(n^{-\delta_2})   
\end{equation*}
for some $\delta_2 > 0$. Then, taking $\delta := \min(\delta_1,\delta_2)>0$, we have that
\begin{equation*}
 \mathbb{P}( B > n^\alpha ) \leq \mathbb{P}(N |C_{max}| > n^\alpha) \leq \mathbb{P}(N > n^{\alpha/2})+\mathbb{P}(|C_{max}| > n^{\alpha/2}) = \mathcal{O}(n^{-\delta})
\end{equation*} \end{proof}

This last proposition shows that the number of vertices in components that are not trees is (for every $\alpha>0$) $\mathcal{O}_{\mathbb{P}}(n^\alpha)$. Now, define $T_G$ as a graph formed by spanning trees of each of the components of $G$ (it is not important which specific ones are chosen). We call $W_{DG}(G)$ and $W_{DG}(T_G)$ the selection sequences defined by the degree-greedy algorithm run in $G$ and $T_G$ respectively. Observe that the components that are trees look exactly the same in $G$ and $T_G$.

We can then couple the realisations of the degree-greedy algorithm in $G$ and $T_G$ to make them coincide in these components in the following way: 

\begin{itemize}
    \item Call the connected components of $G$ as $C_1$, $C_2$,..., $C_l$ and the ones of $T_G$ as $C_1'$, $C_2'$,..., $C_l'$.
    \item Run a degree-greedy algorithm in each of the components. This generates the selection sequences $W_1$, $W_2$,..., $W_l$ for the components of $G$ and $W_1'$, $W_2'$,..., $W_l'$ for the ones of $T_G$. If for some $j \leq l$ the component $C_j$ is a tree, then $C_j = C_j'$ and the respective runs of the degree-greedy algorithm can be trivially coupled to give $W_j = W_j'$. Couple in this manner all the selection sequences of all the components that are trees.

    \item Now, construct $W_{DG}(G)$ inductively as follows: in each step $i \geq 1$, count the number of minimum degree vertices in each component $j \leq l$ of $G_{i-1}$ and call this number $d_j^{(i)}$. Select component $j \leq l$ with probability $d_j^{(i)}/\sum_{k=1}^m d_k^{(i)}$. Set $w_i$ (the $i$-th vertex of $W_{DG}(G)$) as the first vertex of $W_j$ not already in $\{w_1,...,w_{i-1}\}$.
    \item Finally, construct $W_{DG}(T_G)$ in an analogous way but using selection sequences $W_1'$, $W_2'$,..., $W_l'$.
\end{itemize}

It is straightforward that this construction corresponds to the degree-greedy sequential exploration in each of both graphs.

The construction was made in such a way that for every component that is a tree, the same vertices end up in $W_{DG}(G)$ as in $W_{DG}(T_G)$. This means by Proposition \ref{prop:2coreChico} that, at most, $|W_{DG}(G)|$ and $|W_{DG}(T_G)|$ will differ in size by $\mathcal{O}_\mathbb{P}(n^\alpha)$ (the number of vertices in components that are not trees). Besides, $T_G$ is a collection of trees and therefore, by Corollary \ref{coro:arbol}, the degree-greedy ran on it will define a selection sequence with the property $T_1$, and then by Lemma \ref{lemma:optimal} $|W_{DG}(T_G)| = \alpha(T_G)$. But all the edges in $T_G$ are also present in $G$ and so $T_G$ will have a bigger maximum independent set than $G$\footnote{This is because the independence number is monotonically decreasing when adding edges to a graph.}. This implies that 

\begin{equation*}
 |W_{DG}(T_G)| = |W_{DG}(G)| + \mathcal{O}_\mathbb{P}(n^\alpha) = \alpha(T_G) \geq \alpha(G)   
\end{equation*}

which in term implies that (because $|W_{DG}(G)|= \sigma_{DG}(G)$)

\begin{equation*}
 |\alpha(G)-|W_{DG}(G)|| \leq \mathcal{O}_\mathbb{P}(n^\alpha)
\end{equation*}
proving the proposition.

We now give a proof for the case in which $G$ is a supercritical graph. Call $W_{DG}$ the selection sequence defined by the degree-greedy algorithm ran on $G$. We want to show that $|W_{DG}|= \alpha(G) + \mathcal{O}_\mathbb{P}(n^\alpha)$. W.h.p. we have that this sequence selects vertices of degree $1$ or $0$ at least until the remaining graph is subcritical. Suppose this is so, then there exists some value $k_0 \geq 1$ s.t. for every $k \geq k_0$ the remaining graph $G_k$ (see definition \ref{def:remaining}) is subcritical and for every $l \leq k_0$ the vertex $W_{DG}(l)$ has degree either $1$ or $0$ in $G_l$.

The idea is to define a selection sequence $\tilde W$ that is similar to $W_{DG}$, has roughly the same size and for which $|\tilde W| \geq \alpha(G)$. Calling $T_{G_{k_0}}$ the graph formed by the spanning trees of $G_{k_0}$, we define the graph $\tilde G$ as a copy of $G$ in which the subgraph $G_{k_0}$ has been replaced by $T_{G_{k_0}}$. We can then define a selection sequence $\tilde W$ for $\tilde G$ that coincides with $W_{DG}$ until step $k_0$. For $k \geq k_0$, because $G_{k_0}$ is subcritical and the remaining graph of $\tilde G$ is a collection of spanning trees of $G_{k_0}$ (that is, $T_{G_{k_0}}$), we can make $\tilde W$ to have the property $T_1$ and to differ in at most $\mathcal{O}_\mathbb{P}(n^\alpha)$ vertices from $W_{DG}$ in exactly the same way as in the subcritical case. Because $\tilde W$ has the property $T_1$, using Lemma \ref{lemma:optimal}, $|\tilde W| = \alpha(\tilde G)$. Furthermore, because all the edges present in $\tilde G$ are present in $G$ we have that $\alpha(\tilde G) \geq \alpha(G)$. Then, 

\begin{equation*}
     |\tilde W| = |W_{DG}| + \mathcal{O}_\mathbb{P}(n^\alpha) = \alpha(\tilde G) \geq \alpha(G)
\end{equation*}

which means that 

\begin{equation*}
 |\alpha(G)-\sigma_{DG}(G)| \leq \mathcal{O}_\mathbb{P}(n^\alpha)   
\end{equation*}

as we wanted to show.


\subsection{Hydrodynamic limit results}\label{sec:fluidres}

In this section we present the results we will use in the remaining of the paper to show the convergence of processes and stopping times towards deterministic limits. These results are not necessarily presented in full generality but rather in the most convenient form for the applications we intend.

\begin{definition}
 Given a continuous time Markov jump process $A_t\in D[0,\infty)$ we will define its associated \textbf{Dynkin's martingales}\footnote{Which are martingales because of Dynkin's formula.} by
    
    \begin{equation*}
        M_t : = A_t - A_0 - \int_0^t \delta[A_s] ds
    \end{equation*}
    
    \noindent
    where $\delta(\cdot)$ is the drift of $A_t$. 
\end{definition}

The following lemma will be our main tool to prove convergence of stochastic processes towards solutions of differential equations. It is an abstraction of the reasoning behind the proof of the limits in the main theorem of \cite{brightwell2016greedy}.

\begin{lemma}\label{lema:limitefluido}
    Let $(A_t^{(n)}(1),A_t^{(n)}(2),...) \in D[0,\infty)^\mathbb{N}$ be a sequence of countable continuous time Markov jump processes where (for each $k\in\mathbb{N}$) $A_t^{(n)}(k)$ has drift $\delta_k(A_t^{(n)}(1),A_t^{(n)}(2),...)$. If (for every $k\in\mathbb{N}$):
    
    \begin{enumerate}[(i)]
        \item $\delta_k(A_t^{(n)}(1),A_t^{(n)}(2),...)/n = \sum_{i\geq1}^{i_k} \alpha_i A_t^{(n)}(i)/n$, where $i_k\in \mathbb{N}$
        \item $\delta_k(A_t^{(n)}(1),A_t^{(n)}(2),...)/n$ are uniformly bounded
        \item $A_0^{(n)}(k)/n \xrightarrow{n\rightarrow\infty} a_0(k)$ (for some constant $a_0(k)$)
        \item the associated Dynkin's martingales $M_t^{(n)}(k)$ have quadratic variation of order $o_{\mathbb{P}}(n^2)$
    \end{enumerate}
    
    then, if the system of integral equations defined by
    
    \begin{equation*}
        a_t(1) = a_0(1) + \int_0^t \delta_1(a_s(1),a_s(2),...) ds
    \end{equation*}
    
    \begin{equation*}
        a_t(2) = a_0(2) + \int_0^t \delta_2(a_s(1),a_s(2),...) ds
    \end{equation*}
    
    \begin{equation*}
        . \ . \ .
    \end{equation*}
    
    has a unique solution, the processes $A_t^{(n)}(1)$, $A_t^{(n)}(2)$,... converge in probability towards the continuous functions $a_t(1)$, $a_t(2)$ that are solution to the system.
    
\end{lemma}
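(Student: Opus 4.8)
The plan is to prove this as a standard "differential equation method" / fluid limit statement, following the strategy of Darling--Norris type results and the argument in \cite{brightwell2016greedy}. First I would reduce the infinite system to a finite-dimensional argument coordinate by coordinate: by hypothesis (i) the drift $\delta_k$ of the $k$-th coordinate only involves finitely many coordinates $A_t^{(n)}(1),\dots,A_t^{(n)}(i_k)$, so for each fixed $k$ the evolution of $A_t^{(n)}(k)$ is controlled by a finite (but $k$-dependent) block of coordinates; one then proceeds by induction on the index set, or more cleanly treats the whole vector in the Banach space $\ell^\infty$ (or a weighted sequence space) with the sup norm, using (ii) to guarantee that the rescaled drift $\delta(\,\cdot\,)/n$ is a bounded, Lipschitz (indeed affine, by (i)) vector field so that the limiting integral system has a unique solution on $[0,\infty)$ — which is also part of the hypothesis, so I may simply assume it.

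The core step is the martingale/Gronwall estimate. Write, for each $k$,
\begin{equation*}
    \frac{A_t^{(n)}(k)}{n} = \frac{A_0^{(n)}(k)}{n} + \int_0^t \frac{\delta_k\big(A_s^{(n)}\big)}{n}\, ds + \frac{M_t^{(n)}(k)}{n},
\end{equation*}
and compare with the corresponding integral equation for $a_t(k)$. Subtracting, using (i) to write $\delta_k(A_s^{(n)})/n - \delta_k(a_s)/n = \sum_{i\le i_k} \alpha_i \big(A_s^{(n)}(i)/n - a_s(i)\big)$, and setting $e_t^{(n)}(k) := A_t^{(n)}(k)/n - a_t(k)$, one obtains
\begin{equation*}
    \big|e_t^{(n)}(k)\big| \le \big|e_0^{(n)}(k)\big| + C \int_0^t \sup_{i} \big|e_s^{(n)}(i)\big|\, ds + \Big|\frac{M_t^{(n)}(k)}{n}\Big|,
\end{equation*}
where $C$ bounds the $\alpha_i$ coefficients. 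Taking the supremum over $k$ (or over the finite relevant block, then extending) and applying Gronwall's inequality reduces everything to controlling $\sup_k \sup_{s\le t} |M_s^{(n)}(k)|/n$ and $\sup_k |e_0^{(n)}(k)|$. The initial term vanishes in probability by (iii). For the martingale term, hypothesis (iv) says the quadratic variation (equivalently the predictable quadratic variation, up to the usual control) of $M_t^{(n)}(k)$ is $o_{\mathbb P}(n^2)$; Doob's $L^2$-maximal inequality then gives $\mathbb{E}\big[\sup_{s\le t}(M_s^{(n)}(k))^2\big] \le 4\,\mathbb{E}\big[\langle M^{(n)}(k)\rangle_t\big] = o(n^2)$, hence $\sup_{s\le t}|M_s^{(n)}(k)|/n \xrightarrow{\mathbb P} 0$ for each fixed $k$, and (since only finitely many coordinates matter at each stage of the induction) also uniformly over the relevant finite blocks. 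Plugging these into the Gronwall bound yields $\sup_{s\le t}|e_s^{(n)}(k)| \xrightarrow{\mathbb P} 0$ for every $k$ and every finite horizon $t$, which is the claimed convergence in probability; continuity of the limits $a_t(k)$ is immediate from the integral representation with bounded integrand.

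The main obstacle is the bookkeeping needed to make the ``sup over $k$'' step legitimate for a genuinely infinite system: $\sup_k |M_t^{(n)}(k)|/n$ need not go to zero uniformly in $k$, so one cannot naively take the sup over all coordinates in the Gronwall step. The clean way around this is to exploit (i): define the sequence of indices reachable from coordinate $1$ through the dependency graph $k \mapsto \{1,\dots,i_k\}$, observe that for proving convergence of a fixed coordinate $A_t^{(n)}(k)$ one only ever needs the coordinates in a finite ``cone of dependence'' below it, and run the Gronwall argument on that finite set — where all the uniform bounds and the vanishing of finitely many martingale terms are unproblematic. A secondary technical point is checking that (ii) plus (i) really do force the limiting integral system to have solutions that stay bounded and are unique on all of $[0,\infty)$ so that the comparison is valid for arbitrarily large $t$; but since uniqueness of the solution is assumed in the statement and boundedness follows from (ii), this is routine. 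I would also remark that the passage from quadratic variation to predictable quadratic variation, and the verification that $M_t^{(n)}(k)$ is a true (not merely local) martingale, are handled by the uniform boundedness of the rescaled drift together with the standard Dynkin formula, exactly as in \cite{brightwell2016greedy}.
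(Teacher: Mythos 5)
The paper proves this lemma by a compactness argument: it establishes tightness of the rescaled processes coordinate by coordinate (via the uniform bound on the drifts and Arzel\`{a}--Ascoli), extracts a common subsubsequence by a diagonal argument, uses Skorohod's representation to pass the subsubsequential limit into the integral equation with dominated convergence, and then invokes the \emph{assumed} uniqueness of the limiting system to upgrade subsubsequential convergence to convergence of the whole sequence. Your route is genuinely different: you try to prove the convergence directly and quantitatively via a Gronwall-type comparison between $A^{(n)}_\cdot/n$ and $a_\cdot$, using Doob's maximal inequality to kill the martingale term. Both are standard approaches to fluid limits, and the core martingale estimate you describe is correct, but for a countable system under these weak hypotheses the two approaches are not interchangeable.

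The gap is in your resolution of the ``sup over $k$'' problem. You propose two fixes, and neither is justified by the hypotheses. First, the ``finite cone of dependence'': you claim that to prove convergence of a fixed coordinate $k$ one only needs finitely many coordinates, namely the closure of $\{k\}$ under $j\mapsto\{1,\dots,i_j\}$. But that closure need not be finite. In the very application the lemma is used for, $\delta_k(\mu(k))=-k\mu(k)+(k+1)\mu(k+1)$, so $i_k=k+1$ and the closure of $\{k\}$ is $\{1,2,3,\dots\}$: the dependency runs forward indefinitely, never terminating. Your induction on a finite block therefore has nothing to induct on. Second, the ``bounded affine vector field on $\ell^\infty$'' fallback: hypothesis (ii) bounds $\delta_k(A^{(n)}_t)/n$ uniformly \emph{along the process trajectories}, which is a statement about the specific processes, not about the linear map $x\mapsto(\delta_k(x))_k$ being bounded on a sequence space. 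Indeed in the application the coefficients $\alpha^{(k)}_i$ are of order $k$ and unbounded, so the map is not bounded on $\ell^\infty$, and a Gronwall constant $C$ bounding ``the $\alpha_i$ coefficients'' (as you write) does not exist. The paper's compactness approach sidesteps both issues precisely because it never needs a global quantitative estimate: tightness only requires the drift to be bounded \emph{pointwise in $k$} along trajectories, passing to the subsubsequential limit only requires each $\delta_k$ to be a continuous function of finitely many coordinates, and the identification of the limit is delegated entirely to the assumed uniqueness of the integral system. If you want a Gronwall-type proof to survive for this class of systems you would need additional structure, e.g.\ a weighted sequence space in which the generator is bounded, or a priori decay of $A^{(n)}_t(k)/n$ in $k$ uniform in $t$ and $n$; neither is part of the lemma's hypotheses.
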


\begin{proof}
Dividing by $n$ Dynkin's formula we have that

\begin{equation*}
    \frac{A_t^{(n)}(1)}{n} = \frac{A_0^{(n)}(1)}{n} + \int_0^t \frac{ \delta_1^{(n)}[A_s^{(n)}(1),A_s^{(n)}(2),...]}{n}ds + \frac{M_t^{(n)}(1)}{n} 
\end{equation*}

\begin{equation*}
    \frac{A_t^{(n)}(2)}{n} = \frac{A_0^{(n)}(2)}{n} + \int_0^t \frac{ \delta_2^{(n)}[A_s^{(n)}(1),A_s^{(n)}(2),...]}{n}ds + \frac{M_t^{(n)}(2)}{n} 
\end{equation*}

\begin{equation*}
    . \ . \ .
\end{equation*}

Because the associated Dynkin's martingales have quadratic variation of order $o_\mathbb{P}(n^2)$ by Doob's inequality we have that, for each $k\in\mathbb{N}$, $\sup_{s\leq t}|M_t^{(n)}|/n$ converges uniformly in distribution in $C[0,\infty)$ towards 0. Since the $\delta_k^{(n)}(\cdot)$ are uniformly bounded, for every $k\in\mathbb{N}$ and $T>0$, the sequences of processes $(A_t^{(n)}(k)-M_t^{(n)})/n$ are uniformly Lipschitz and uniformly bounded in $[0,T]$. Then, by Arzela-Ascoli's Theorem, for every $T >0$ these families of processes are tight in $C[0,T]$. This implies that for every subsequence there exists subsubsequences such that 

\begin{equation*}
    \frac{A_t^{(n)}(1)-M^{(n)}_t(1)}{n} \rightarrow a_t(1)
\end{equation*}

\begin{equation*}
    \frac{A_t^{(n)}(2)-M^{(n)}_t(2)}{n} \rightarrow a_t(2)
\end{equation*}

\begin{equation*}
    . \ . \ .
\end{equation*}

uniformly in distribution in compact sets, for some continuous functions $a_t(1)$, $a_t(2)$, etc. Since the processes are countable, we may take a common subsubsequence where all this convergences hold. Furthermore, by Skorohod's Representation Theorem, there exists an (abstract) probability space s.t. all these limits and the convergence (for every $k\in\mathbb{N}$) of $\sup_t |M^{(n)}_t(k)|/n$ towards $0$ hold uniformly and almost surely in compact sets. Then, for this subsubsequence

\begin{equation*}
    \frac{A_t^{(n)}(1)}{n} \xrightarrow{a.s.} a_t(1)
\end{equation*}

\begin{equation*}
    \frac{A_t^{(n)}(2)}{n} \xrightarrow{a.s.} a_t(2)
\end{equation*}

\begin{equation*}
    . \ . \ .
\end{equation*}

By hypothesis $\delta_k^{(n)}[A_s^{(n)}(1),A_s^{(n)}(2),...]/n = \sum_{\geq i}^{i_k} \alpha_i^{(k)} A_t^{(n)}(k)/n$, then $\lim_n \delta_k(A_t^{(n)}(1),A_t^{(n)}(2),...)/n = \delta_k(a_t(1),a_t(2),...)$. And because all the drifts are uniformly bounded, taking limit over this subsubsequence and using that $A_t^{(n)}(k)/n\rightarrow a_0(k)$ and dominated convergence yields

\begin{equation*}
    a_t(1) = a_0(1) + \int_0^t \delta_1(a_s(1),a_s(2),...) ds
\end{equation*}

\begin{equation*}
    a_t(2) = a_0(2) + \int_0^t \delta_2(a_s(1),a_s(2),...) ds
\end{equation*}

\begin{equation*}
    . \ . \ .
\end{equation*}

The convergence towards the solution of this system of integral equations is well defined as, by hypothesis, it has a unique solution. We now need to prove that this convergence is not only in this subsubsequence but rather in the whole original sequence. For this, note that since the limits are continuous and deterministic, this convergence is equivalent to convergence in distribution in the Skorohod topology on $D[0,\infty)$. But because every subsequence has a subsubsequence that converges to the same limit (because by hypothesis it is unique), the original sequence converges in distribution to it. Moreover, as the limit is deterministic, the convergence can equivalently be taken to be in probability.\end{proof}

In the following lemma, we will establish convergence criteria for the stopping times of sequences of decreasing processes that converge towards an hydrodynamic limit. 

\begin{lemma}\label{lema:convtiempo}
    Let $(A_t^{(n)}(1),A_t^{(n)}(2),...) \in D[0,\infty)^\mathbb{N}$ be a sequence of countable continuous time Markov jump processes with $A^{(n)}_t(1)$ a decreasing process of transition matrix $Q^{(n)}_{ij}$ and define the stopping times $T^{(n)}:=\inf \{ t\geq0 : A_t^{(n)}(1) = 0 \}$ and the deterministic time $T:=\inf\{s\geq0: a_s(1)=0\}$. Under the same hypothesis of previous lemma and further assuming that:
    
    \begin{enumerate}[(i)]
        \item For every $t \leq T^{(n)}$ and (if $A_t^{(n)}(1) = i \geq 0$), $\sum_{j\leq i}Q^{(n)}_{ij}(A_t^{(n)}(2),...) \geq C^{(n)}n$ with $C^{(n)} \xrightarrow{n\rightarrow\infty} C > 0$
        \item The function $a_t(1)$ is continuously differentiable with $\dot a_t(1) \leq - C'$ (for some $C' >0$ and $t\leq T$)
    \end{enumerate}
    
    \noindent
    then, $T^{(n)} \xrightarrow{\mathbb{P}}T$.
\end{lemma}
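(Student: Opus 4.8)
The plan is to establish the two one-sided estimates $\mathbb{P}(T^{(n)} < T-\epsilon) \to 0$ and $\mathbb{P}(T^{(n)} > T+\epsilon) \to 0$ for every $\epsilon>0$, which together give $T^{(n)} \xrightarrow{\mathbb{P}} T$. First I would record two elementary consequences of hypothesis (ii): since $\dot a_s(1)\le -C'$ while $a_s(1)>0$, the limit $a_t(1)$ reaches $0$ in finite time (so $T<\infty$), and integrating $\dot a_s(1)\le -C'$ over $[T-\epsilon,T]$ gives $a_{T-\epsilon}(1)\ge C'\epsilon>0$ for $0<\epsilon\le T$ (the case $\epsilon>T$ being vacuous for the lower bound).

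For the lower bound I would argue as follows. On the event $\{T^{(n)}<T-\epsilon\}$ the process $A^{(n)}_\cdot(1)$, being non-increasing and absorbed at $0$, satisfies $A^{(n)}_{T-\epsilon}(1)=0$. But Lemma \ref{lema:limitefluido} gives $A^{(n)}_{T-\epsilon}(1)/n \xrightarrow{\mathbb{P}} a_{T-\epsilon}(1)\ge C'\epsilon$, whence
\[
\mathbb{P}\bigl(T^{(n)}<T-\epsilon\bigr)\ \le\ \mathbb{P}\Bigl(\bigl|A^{(n)}_{T-\epsilon}(1)/n - a_{T-\epsilon}(1)\bigr|\ge C'\epsilon/2\Bigr)\ \xrightarrow{n\to\infty}\ 0 .
\]

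The upper bound is where the work lies, and it is what hypothesis (i) is for. Since $a_T(1)=0$, Lemma \ref{lema:limitefluido} gives $A^{(n)}_T(1)=o_{\mathbb{P}}(n)$, so for any fixed $\delta>0$ we have $\mathbb{P}(A^{(n)}_T(1)>\delta n)\to 0$. I would then work on the event $\{A^{(n)}_T(1)\le \delta n\}\cap\{T^{(n)}>T\}$ (there being nothing to prove otherwise) and note that for every $t\in[T,T^{(n)})$ one has $A^{(n)}_t(1)=i\ge 1$, so by (i) the first coordinate jumps at total rate at least $C^{(n)}n$, necessarily downward by monotonicity, uniformly in the values of the other coordinates. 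Hence, each jump decreasing that coordinate by at least $1$, the time for $A^{(n)}_\cdot(1)$ to hit $0$ after time $T$ is stochastically dominated by $S_n:=\sum_{\ell=1}^{\lceil\delta n\rceil} E_\ell$ with the $E_\ell$ i.i.d.\ $\mathrm{Exp}(C^{(n)}n)$. Since $\mathbb{E}(S_n)=\lceil\delta n\rceil/(C^{(n)}n)\to \delta/C$ and $\mathrm{Var}(S_n)=\lceil\delta n\rceil/(C^{(n)}n)^2\to 0$, Chebyshev's inequality yields $\mathbb{P}(S_n>2\delta/C)\to 0$; choosing $\delta<C\epsilon/2$ this becomes $\mathbb{P}(T^{(n)}>T+\epsilon)\to 0$.

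Combining the two estimates proves the lemma. The main obstacle is the stochastic-domination step in the upper bound: one must argue carefully that, although the jump rate $\sum_{j\le i}Q^{(n)}_{ij}(A^{(n)}_t(2),\ldots)$ of the first coordinate depends on the randomly evolving remaining coordinates, hypothesis (i) bounds it below by $C^{(n)}n$ at every time up to absorption, and this uniform lower bound is exactly what licenses coupling the residual absorption time with the sum $S_n$ of independent exponentials (a standard comparison for pure-jump processes). A minor point to check along the way is that the convergence in Lemma \ref{lema:limitefluido}, being towards a continuous deterministic limit, does transfer to convergence at the fixed times $T$ and $T-\epsilon$.
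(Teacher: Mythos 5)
Your proposal is correct and follows essentially the same strategy as the paper's proof: both split into the two one-sided deviations $\{T^{(n)}<T-\epsilon\}$ and $\{T^{(n)}>T+\epsilon\}$, both use the derivative bound $\dot a_t(1)\le -C'$ together with the uniform-in-compacts convergence from Lemma \ref{lema:limitefluido} to dispatch the first, and both bound the residual absorption time after $T$ by coupling $A^{(n)}_\cdot(1)$ with a process of constant jump rate $\asymp C n$ (the paper phrases this as a pure-death process and a Poisson count where you use the equivalent sum of i.i.d.\ exponentials) and then apply Chebyshev.
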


\begin{proof}
    Given $\delta > 0$, we want to show that the probability of $\{|T-T^{(n)}|\geq\delta\}$ goes to 0. For this, suppose that $T > T^{(n)}$. Now, suppose that the event $\{\sup_{t\leq T}|A_t^{(n)}(1)/n-a_t(1)|\leq \delta C'\}$ holds. Then, $a_{T^{(n)}}(1)$ is at most $\delta/C'$. By hypothesis we have that for every $t\leq T$ the derivative of $a_t(1)$ is less than $-C'$, then $a_{T^{(n)}+\delta}(1) \leq a_{T^{(n)}}(1) - C' \delta \leq 0$. Therefore, $\{T-T^{(n)} \geq \delta\} \subseteq \{\sup_{t\leq T}|A_t^{(n)}(1)/n-a_t(1)| \geq \delta C'\}$. And because $A_t^{(n)}(1)/n\xrightarrow{\mathbb{P}}a_t(1)$, the probability of this last event tends to 0.
    
    On the other hand, now suppose that $T^{(n)} > T$. If the event $\{\sup_{t\leq T}|A_t^{(n)}(1)/n-a_t(1)|\leq C \delta / 4\}$ holds, the value of $A_{T}^{(n)}/n$ will be at most $C \delta / 4$. Because by hypothesis $A_t^{(n)}(1)$ is decreasing and if $A_t^{(n)}(1)=i\geq0$ the process has transitions to lower states with rate at least $n C^{(n)} = n C + o(n)$. If we define $(Z_t)_{t\geq T}$ to be a pure death process with initial value $Z_T = C \delta / 4$ and death rate $n C / 2$, we can then couple (for $n$ larger than certain $n_0\geq0$) the process $A_t^{(n)}$ to be larger than $Z_t$ for $t \geq T$. Then, for $n \geq n_0$, $\{T^{(n)} - T \geq \delta\} \subseteq \{ Z_{T+\delta} \geq 0 \}$. But, defining $X\sim \mbox{Pois}(n C \delta / 2)$, the probability of this last event is equal to $\mathbb{P}(X \leq n C \delta / 4)$ and by Chebychev's inequality we have that
    
    \begin{equation*}
        \mathbb{P}( X \leq n C \delta / 4 ) = \mathbb{P}( nC\delta / 2 - X \geq n C \delta /4) \leq \mathbb{P}( | X - nC\delta / 2| \geq n C \delta /4) \leq \frac{n C \delta/2}{n^2 C^2 \delta^2 / 16} = \frac{8}{n C \delta}
    \end{equation*}
    
    Summarising,
    
    \begin{equation*}
	\begin{array}{cl}
        	\mathbb{P}( |T^{(n)}-T| \geq \delta ) & = \mathbb{P}( T  - T^{(n)} > \delta) \\
        	& \ \ + \mathbb{P}( \sup_{t\leq T}|A_t^{(n)}(1)/n-a_t(1)|\leq C \delta / 4, T^{(n)} - T > \delta ) \\
		& \ \  +  \mathbb{P}( \sup_{t\leq T}|A_t^{(n)}(1)/n-a_t(1)|\geq C \delta / 4, T^{(n)} - T > \delta ) \\
		
		& \leq  \mathbb{P}(\sup_{t\leq T}|A_t^{(n)}(1)/n-a_t(1)|\geq \delta C' ) \\
		& \ \ + \mathbb{P}( \sup_{t\leq T}|A_t^{(n)}(1)/n-a_t(1)|\leq C \delta / 4, T^{(n)} - T > \delta ) \\
		& \ \ +  \mathbb{P}( \sup_{t\leq T}|A_t^{(n)}(1)/n-a_t(1)|\geq C \delta / 4 ) \\ 
		
		& \leq  \mathbb{P}(\sup_{t\leq T}|A_t^{(n)}(1)/n-a_t(1)|\geq \delta C' ) + \mathbb{P}( X \leq n C \delta / 4 ) \\
		& \ \ +  \mathbb{P}( \sup_{t\leq T}|A_t^{(n)}(1)/n-a_t(1)|\geq C \delta / 4 ) \xrightarrow{n\rightarrow\infty} 0 
        \end{array}
\end{equation*}
    
   where the last term $\mathbb{P}( \sup_{t\leq T}|A_t^{(n)}(1)/n-a_t(1)|\geq C \delta / 4 )$ goes to $0$ because we are under the hypothesis of the previous lemma.
\end{proof}

We now need to establish the convergence of the coordinates of countable Markov jump processes at specific stopping times towards corresponding values of its hydrodynamic limit. For this, we will use the following corollary. 

\begin{corollary}\label{coro:convcoord}
    Let $(A_t^{(n)}(1),A_t^{(n)}(2),...) \in D[0,\infty)^\mathbb{N}$ be as in the previous lemma. Then, for every $k\in\mathbb{N}$ we will have that $A^{(n)}_{T^{(n)}}(k)/n \xrightarrow{\mathbb{P}}a_T(k)$. 
\end{corollary}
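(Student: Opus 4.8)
The plan is to obtain Corollary \ref{coro:convcoord} as a direct combination of Lemmas \ref{lema:limitefluido} and \ref{lema:convtiempo}, using only the elementary fact that a uniformly convergent sequence of (random) functions, evaluated at a convergent sequence of (random) times, converges. First I would record two consequences of the preceding lemmas. From Lemma \ref{lema:limitefluido}, for each fixed $k$ the rescaled process $A_t^{(n)}(k)/n$ converges in probability to the continuous deterministic function $a_t(k)$; since the limit is continuous and deterministic, convergence in probability in the Skorokhod topology upgrades to uniform convergence in probability on every compact time interval, i.e.\ $\sup_{t\le S}\lvert A_t^{(n)}(k)/n - a_t(k)\rvert \xrightarrow{\mathbb{P}} 0$ for every fixed $S>0$. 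From Lemma \ref{lema:convtiempo} (whose hypotheses are in force), $T^{(n)}\xrightarrow{\mathbb{P}}T$.

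Then I would fix $k\in\mathbb{N}$ and $\epsilon>0$, pick the horizon $S=T+1$, and work on the event $\{T^{(n)}\le S\}$, on which one may write
\[
\Bigl\lvert \tfrac{A_{T^{(n)}}^{(n)}(k)}{n}-a_T(k)\Bigr\rvert \;\le\; \sup_{t\le S}\Bigl\lvert \tfrac{A_t^{(n)}(k)}{n}-a_t(k)\Bigr\rvert \;+\; \bigl\lvert a_{T^{(n)}}(k)-a_T(k)\bigr\rvert .
\]
The first term tends to $0$ in probability by the uniform-on-compacts convergence recalled above. The second term tends to $0$ in probability because $a_\cdot(k)$, being continuous on the compact interval $[0,S]$, is uniformly continuous there, so $\lvert a_{T^{(n)}}(k)-a_T(k)\rvert$ is bounded by the modulus of continuity of $a_\cdot(k)$ evaluated at $\lvert T^{(n)}-T\rvert$, and $T^{(n)}\xrightarrow{\mathbb{P}}T$. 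Finally, since $\mathbb{P}(T^{(n)}>S)\to 0$ by Lemma \ref{lema:convtiempo}, restricting to $\{T^{(n)}\le S\}$ costs nothing asymptotically; combining the three estimates via a union bound yields $\mathbb{P}\bigl(\lvert A_{T^{(n)}}^{(n)}(k)/n - a_T(k)\rvert\ge\epsilon\bigr)\to 0$, which is the assertion.

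I do not expect a genuine obstacle here. The only point that requires a moment of care is that $T^{(n)}$ carries no deterministic upper bound a priori, so one cannot invoke uniform convergence on a fixed interval before first truncating at a horizon $S$ that $T^{(n)}$ exceeds only with vanishing probability — and that truncation is legitimate precisely because of the stopping-time convergence established in Lemma \ref{lema:convtiempo}. A secondary, entirely routine check is the upgrade from Skorokhod convergence in probability to a continuous deterministic limit to uniform-in-probability convergence on compacts, which is standard.
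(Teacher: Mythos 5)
Your proof is correct and follows essentially the same route as the paper: the same triangle-inequality split into $\lvert A_{T^{(n)}}^{(n)}(k)/n - a_{T^{(n)}}(k)\rvert$ and $\lvert a_{T^{(n)}}(k) - a_T(k)\rvert$, with the first term handled by the uniform-on-compacts convergence from Lemma \ref{lema:limitefluido} (noting $T^{(n)}$ is w.h.p.\ bounded because $T^{(n)}\xrightarrow{\mathbb{P}}T$) and the second by continuity of $a_\cdot(k)$ together with Lemma \ref{lema:convtiempo}. The paper handles the truncation at a finite horizon in a parenthetical remark; you spell it out explicitly, which is the only (cosmetic) difference.
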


\begin{proof}
We have that

\begin{equation*}
 |A_{T^{(n)}}(k)/n-a_T(k)| \leq |A_{T^{(n)}}(k)/n-a_{T^{(n)}}|+|a_{T^{(n)}}-a_T(k)|   
\end{equation*}

where the first term in the r.h.s. goes w.h.p. to 0 because by Lemma \ref{lema:limitefluido} $A_t^{(n)}(k)/n\xrightarrow{\mathbb{P}}a_t(k)$ uniformly on compact sets (and since $T^{(n)}\xrightarrow{\mathbb{P}}T$, the $T^{(n)}$ are w.h.p. uniformly bounded); and the second one because $T^{(n)}\xrightarrow{\mathbb{P}}T$ by Lemma \ref{lema:convtiempo}  and $a_t(k)$ is continuous. \end{proof}

\subsection{Proof of Theorem \ref{thm:oneapplication}}

For the analysis of the effect of $M_1^{(n)}(\cdot)$ we will break the degree-greedy dynamics in two. In the \emph{first phase}, we will only connect the vertices of degree $1$ and we will accumulate the number of free half-edges stemming from the blocked vertices in a variable $B^{(n)}_t$. We will do this until we have explored $n p_1^{(n)}$ vertices of degree $1$. While in the \emph{second phase}, we will take the final value of $B_t^{(n)}$ from phase $1$ and we will sequentially match each of this half-edges and remove the edges formed from the graph.

Because the Configuration Model is not sensitive to the order in which the matching of half-edges is done, this will not affect the final degree distribution obtained (which will be the same as the one obtained applying the map $M_1^{(n)}(\cdot)$) but will nevertheless make the analysis of the resulting limit easier. As we will see, for the proof of Theorem \ref{thm:oneapplication} it will only be enough to analyse the first of these two phases.

The proof will consist of three parts. First, we give a stochastic description of a Markov process that gives the evolution of the first phase. Then, we establish the concentration of the asymptotic values of the number of unmatched half-edges, the number of unmatched half-edges stemming from blocked vertices and the normalised degree measure of the vertices not connected to any degree $1$. Using this convergences we prove that the resulting graph after phase 1 can be regarded as a Configuration Model with known limiting distribution. Finally, these limits combined with an observation from percolation for Configuration Models, allow us to establish the criteria given by Theorem \ref{thm:oneapplication}.

\noindent
\emph{(i) Stochastic description of the first phase:} the stochastic process used to model the first phase of the matching of the degree 1 vertices will be similar in spirit to the one used in \cite{brightwell2016greedy} to study the greedy algorithm. Here, we also keep track of the number of unpaired blocked half-edges in the random variable $B_t^{(n)}$. The other variables used to describe the process will be: the number $U_t^{(n)}$ of unpaired half-edges, the number $A_t^{(n)}$ of remaining degree 1 vertices to match and (for $k \in \mathbb{N}$) the number $\mu_t^{(n)}(k)$ of unexplored degree $k$ vertices.

Then, at each time $t \geq 0$ the state of the process will be described by the infinite dimensional vector $(U_t^{(n)},A_t^{(n)},B_t^{(n)},\mu_t^{(n)}(2),\mu_t^{(n)}(3),...)$. The process in question evolves as follows: at each time $t \geq 0$ every degree 1 vertex in $A_t^{(n)}$ will have an exponential clock with rate $1$. When one of the clocks of some of these vertices rings, the vertex is removed from $A_t^{(n)}$ and its edge is uniformly paired to another unpaired edge. The state of the vertex with the half-edge selected for the pairing is declared \emph{blocked} and its unmatched half-edges are added to $B_t^{(n)}$. The process goes on until there are no more degree 1 vertices to be paired. Because it is defined by well-behaved transition rates, it is straightforward that the resulting process is Markovian. The stopping time in which the process finishes will be given by $T_1^{(n)} := \inf \{ t\geq0 : A_t^{(n)} = 0 \}$ and will be a.s. finite.
\\\\
\emph{(ii) Convergence of the final values of the coordinates:}

To analyse the convergence of the final value of the number of unmatched half-edges, we will represent with $\{e_i \not\leftrightarrow (1)\}$ the event that the half-edge $e_i$ is not matched nor stemming from a degree $1$ vertex. We will then define the r.v. $Y^{(n)} := \sum_{i=1}^{n \lambda^{(n)}} \mathbb{1}_{\{e_i \not\leftrightarrow (1)\}} = U_{T_1^{(n)}}^{(n)}$ that gives the number of unmatched half-edges at the end of the first phase. The convergence will be proved by showing that this variable concentrates around its mean. For this, we will then first compute its corresponding value:

\begin{equation*}
    \mathbb{E}\left(Y^{(n)}\right) = \mathbb{E}\left( \sum_{i=1}^{n \lambda^{(n)}} \mathbb{1}_{\{e_i \not\leftrightarrow (1)\}} \right)
\end{equation*}

Because half-edges are interchangeable, this is equal to $n \lambda^{(n)} \mathbb{P}(e_1 \not\leftrightarrow (1))$. The probability that the event $\{ e_1 \not\leftrightarrow (1) \}$ holds may be seen to be given by $\frac{n \lambda^{(n)} - n p_1^{(n)} - 1}{n \lambda^{(n)} - 1} \frac{n \lambda^{(n)} - n p_1^{(n)} - 2}{n \lambda^{(n)} - 2} = (1-p_1/\lambda)^2 + o(1)$.

We then have that 

\begin{equation}\label{eq:convmediaY}
    \mathbb{E}\left(Y^{(n)}\right) = n \lambda^{(n)} (1-p_1/\lambda)^2 + o(n)
\end{equation}

To show the concentration, we will now bound the variance of this variable:

\begin{equation}\label{eq:varYi1}
    \mbox{Var}\left( Y^{(n)} \right) = \mathbb{E}\left[ \left(Y^{(n)}\right)^2\right] - \mathbb{E}\left(Y^{(n)}\right)^2
\end{equation}

Where the first term in \eqref{eq:varYi1} will be given by

\begin{equation*}
    \mathbb{E}\left( \sum_{i\neq j}^{n \lambda^{(n)}} \mathbb{1}_{\{e_j \not\leftrightarrow (1)\}} \mathbb{1}_{\{e_j \not\leftrightarrow (1)\}} \right) +  \mathbb{E}\left( Y^{(n)} \right)
\end{equation*}

Then, defining $A := \mathbb{E}\left( \sum_{i\neq j}^{n \lambda^{(n)}} \mathbb{1}_{\{e_j \not\leftrightarrow (1)\}} \mathbb{1}_{\{e_j \not\leftrightarrow (1)\}} \right)$, $B:=\mathbb{E}\left( Y^{(n)} \right)$ and $C:= \mathbb{E}\left(Y^{(n)}\right)^2$, we have that $\mbox{Var}\left( Y^{(n)} \right) = A + B - C$. 

The term $A$ can be bounded above by

\begin{equation*}\label{eq:cotasupvarY}
    n \lambda^{(n)}(n \lambda^{(n)} -1) \mathbb{P}(e_1 \not\leftrightarrow e_2, e_1 \not\leftrightarrow (1), e_2 \not\leftrightarrow (1))
\end{equation*}

With $\{e_1 \not\leftrightarrow e_2\}$ representing the event that $e_1$ is not matched to $e_2$. This probability is given by

\begin{equation*}
    \mathbb{P}(e_1 \not\leftrightarrow (1),e_2 \not\leftrightarrow (1)|e_1 \not\leftrightarrow e_2)\mathbb{P}(e_1 \not\leftrightarrow e_2).
\end{equation*}

It can be easily seen that $\mathbb{P}(e_1 \not\leftrightarrow (1),e_2 \not\leftrightarrow (1)|e_1 \not\leftrightarrow e_2) = \prod_{k=1}^4 \left( 1 - \frac{n p_1^{(n)}}{n \lambda^{(n)} - k} \right)$, and that $\mathbb{P}(e_1 \not\leftrightarrow e_2) = 1 + o(1)$.

We then get that $A \leq n \lambda (n \lambda-1) (1-p_1/\lambda)^{4} + o(n^2)$. Also, by \eqref{eq:convmediaY}, $B = n \lambda (1-p_1/\lambda)^2 + o(n)$ and $C = n^2 \lambda^{2} (1-p_1/\lambda)^{4} +o(n^2)$. This shows that the variance of $Y^{(n)}$ is $o(n^2)$. 

Using Chebychev's inequality we then get that (for every $\epsilon>0$)

\begin{equation}
    \mathbb{P}\left(|Y^{(n)}-\mathbb{E}(Y^{(n)})| > \epsilon n\right) \leq \frac{\mbox{Var}\left(Y^{(n)}\right)}{\epsilon^2 n^2} \rightarrow 0
\end{equation}

Which proves that $U_{T_1^{(n)}}^{(n)}/n$ converges in probability towards $u_1:=(1-p_1/\lambda)^2 \lambda=Q^2\lambda$.

The computation for the convergence of the final value of the degree measure will be completely analogous. We will represent with $\{v_j \not\leftrightarrow (1)\}$ the event that the vertex $v_j$ is not connected to any degree $1$ vertex at time $T_1^{(n)}$. Then, (for $i\geq2$) the random variable $Z_i^{(n)} := \sum_{j=1}^{n} \mathbb{1}_{\{v_j \not\leftrightarrow (1)\}} \mathbb{1}_{\{d_{v_j}=i\}} = \mu_{T_1^{(n)}}^{(n)}(i)$ gives the number of degree $i$ vertices not connected to any degree $1$ vertex at the end of the first phase. In other words, $Z_i^{(n)}$ gives the number of vertices of degree $i$ that remain after the first phase. We will first prove that these variables converge in probability.

For this, we compute their corresponding mean values:

\begin{equation*}
    \mathbb{E}\left(Z_i^{(n)}\right) = \mathbb{E}\left( \sum_{j=1}^{n} \mathbb{1}_{\{v_j \not\leftrightarrow (1)\}} \mathbb{1}_{\{d_{v_j}=i\}} \right)
\end{equation*}

Which, because vertices are interchangeable, is equal to $n \mathbb{P}(v_1 \not\leftrightarrow (1),d_{v_1}=i)$. This last probability is easy to compute and gives $p_i^{(n)} \prod_{l=1}^i \left(1 - \frac{n p_1^{(n)}}{n \sum_{i\geq1}i p_i^{(n)}-(2l-1)} \right)$, which converges as $n\rightarrow\infty$ to $(1-p_1/\lambda)^i p_i$.

We then have that 

\begin{equation}\label{eq:convmedia}
    \mathbb{E}\left(Z_i^{(n)}\right) = n (1-p_1/\lambda)^i p_i + o(n)
\end{equation}

We will now bound the variance of these variables:

\begin{equation}\label{eq:varZi1}
    \mbox{Var}\left( Z_i^{(n)} \right) = \mathbb{E}\left[ \left(Z_i^{(n)}\right)^2\right] - \mathbb{E}\left(Z_i^{(n)}\right)^2
\end{equation}

Where the first term in \eqref{eq:varZi1} will be given by

\begin{equation*}
    \mathbb{E}\left( \sum_{j\neq k}^n \mathbb{1}_{\{v_j \not\leftrightarrow (1)\}} \mathbb{1}_{\{v_k \not\leftrightarrow (1)\}} \mathbb{1}_{\{d_{v_j}=d_{v_k}=i\}} \right) +  \mathbb{E}\left( Z_i^{(n)} \right)
\end{equation*}

Then, defining $A' := \mathbb{E}\left( \sum_{j\neq k}^n \mathbb{1}_{\{v_j \not\leftrightarrow (1)\}} \mathbb{1}_{\{v_k \not\leftrightarrow (1)\}} \mathbb{1}_{\{d_{v_j}=d_{v_k}=i\}} \right)$, $B':=\mathbb{E}\left( Z_i^{(n)} \right)$ and $C':= \mathbb{E}\left(Z_i^{(n)}\right)^2$, we have that $\mbox{Var}\left( Z_i^{(n)} \right) = A' + B' - C'$. 

Now, the term $A'$ can be bounded above by

\begin{equation*}\label{eq:cotasupvar}
    n(n-1) \mathbb{P}(v_1 \not\leftrightarrow v_2,v_1 \not\leftrightarrow (1),v_2 \not\leftrightarrow (1),d_{v_1}=d_{v_2}=i)
\end{equation*}

Where $\{v_1 \not\leftrightarrow v_2\}$ represents the event that $v_1$ is not connected to $v_2$ (where $v_1$ and $v_2$ are two distinct uniform vertices). This probability is equal to

\begin{equation*}
    \mathbb{P}(v_1 \not\leftrightarrow (1),v_2 \not\leftrightarrow (1)|v_1 \not\leftrightarrow v_2,d_{v_1}=d_{v_2}=i)\mathbb{P}(v_1 \not\leftrightarrow v_2|d_{v_1}=d_{v_2}=i)\mathbb{P}(d_{v_1}=d_{v_2}=i)
\end{equation*}

Furthermore, simple computations show that

\begin{equation*}
\begin{array}{cl}
    \mathbb{P}(v_1 \not\leftrightarrow (1),v_2 \not\leftrightarrow (1)|v_1 \not\leftrightarrow v_2,d_{v_1}=d_{v_2}=i) = \prod_{l=1}^i & \left(1 - \frac{n p_1^{(n)}}{n \sum_{i\geq1}i p_i^{(n)}-(i+2l-1)} \right) \\
    & \times \left(1 - \frac{n p_1^{(n)}}{n \sum_{i\geq1}i p_i^{(n)}-(2i+2l-1)} \right),
\end{array}
\end{equation*}

that $\mathbb{P}(v_1 \not\leftrightarrow v_2|d_{v_1}=d_{v_2}=i) = 1 + o(1)$ and that $\mathbb{P}(d_{v_1}=d_{v_2}=i) = p_i^{(n)2} + o(1)$.

Putting all this together shows that $A' \leq n(n-1) (1-p_1/\lambda)^{2i}p_i^2 + o(n^2)$. Finally, by \eqref{eq:convmedia}, $B' = n (1-p_1/\lambda)^i p_i + o(n)$ and $C' = n^2 (1-p_1/\lambda)^{2i} p_i^2 +o(n^2)$. Which proves that the variance of $Z_i^{(n)}$ is $o(n^2)$. 

By Chebychev's inequality this implies that

\begin{equation}
    \mathbb{P}\left(|Z_i^{(n)}-\mathbb{E}(Z_i^{(n)})| > \epsilon n\right) \leq \frac{\mbox{Var}\left(Z_i^{(n)}\right)}{\epsilon^2 n^2} \rightarrow 0
\end{equation}

Which in turn means that for any initial asymptotic degree distribution that is bounded, all the $Z_i^{(n)}/n$ will converge jointly in probability to $(1-p_1/\lambda)^i p_i$. But because $\left(Z_i^{(n)}\right)_{n\geq0}$ is bounded by $(p_i^{(n)})_{n\geq0}$ and this last sequence is eventually uniformly summable, then

\begin{equation}
 \left(Z_2^{(n)}/n,...,Z_i^{(n)}/n,...\right) \xrightarrow{\mathbb{P}} \left(\mu_1(2),...,\mu_1(i),...\right) := \left( Q^2 p_2,...,Q^ip_i,...\right)    
\end{equation}

Furtheremore, because we have that (for all $t\geq0$) $U_t^{(n)} = A_t^{(n)} + B_t^{(n)} + \sum_{i=2}^\infty i \mu_t^{(n)}(i)$ a.s., the normalised number $B_{T_1^{(n)}}/n$ of unmatched half-edges stemming from blocked vertices at the end of the first phase converges in probability towards $b_1 := u_1 - \sum_{i=2}^\infty i \mu_1(i)$.

Finally, as $\sum_{k\geq1} k^2 p_k^{(n)}$ converges towards $\sum_{k\geq1} k^2 p_k$, then these sums are uniformly summable. And because, for every $k\geq2$, we have that $\mu_{T_1^{(n)}}^{(n)}(k)/n \leq p_k^{(n)}$, the sums $\sum_{k\geq2}k^2 \mu^{(n)}_{T_1^{(n)}}/n$ will also be uniformly summable. This implies that

\begin{equation*}
    \sum_{k\geq2} k^2 p_k^{(n)}\mu^{(n)}_{T_1^{(n)}}/n\xrightarrow{\mathbb{P}} \sum_{k\geq2} k^2 \mu_1(k)
\end{equation*}

We then recover the (CA) for the degree distribution of the remaining graph after the first phase. This means that, if we regard the unexplored blocked half-edges as degree $1$ vertices, the graph obtained when the first phase is finished can be treated as a Configuration Model\footnote{For this we also need the number of remaining vertices to tend to infinity when $n\rightarrow\infty$, but this can be easily checked to be the case if $p_1<1$.} with limiting degree distribution $\tilde p_1 = b_1/K = (u_1-\sum_{i\geq2}\mu_1(i))/K$, and (for $k\geq2$) $\tilde p_k  = \mu_1(k)/K$ (where $K$ is just a normalisation constant).
\\\\
\emph{(iii) Criteria for subcriticality:} here we will establish under which circumstances the graph obtained after applying the map $M_1^{(n)}(\cdot)$ is w.h.p. subcritical. Note that, to do so, one in principle has to analyse what happens to the degree distribution during the second phase of the dynamics and then determine if the distribution obtained is subcritical or not. But the second phase of the dynamics is equivalent to matching $B_{T_1^{(n)}}^{(n)}$ degree $1$ vertices and then removing these vertices and the edges so formed. In the same way as was discussed in \cite{janson2009percolation}, matching degree $1$ vertices and then removing them and their edges from the graph does not modify the criticality of a Configuration Model graph with finite second moment\footnote{This is so because, for each degree $1$ vertex removed, the largest connected component's size is reduced at most by $1$. This is not true for larger degree vertices.}. We can then establish the subcriticality of the graph after applying $M_1^{(n)}(\cdot)$ just by computing the criticality parameter $\tilde \nu$ of a graph of limiting degree distribution $(\tilde p_k)_{k\geq1}$.

By explicitly computing the criticality parameter we obtain that

\begin{equation}
    \tilde \nu = \frac{\sum_{i\geq2}i (i-1) \tilde p_i}{\sum_{i\geq1}i \tilde p_i} = \frac{1}{\lambda} \sum_{i\geq2} i(i-1) Q^{i-2} p_i = G_D''(Q)/\lambda
\end{equation}

By Theorem 2.3 \cite{janson2009new}, the obtained graph is subcritical when this parameter is strictly less than $1$. The conclusion then follows by Proposition \ref{prop:optimalidadCM}\footnote{At first sight, one could only apply this proposition for distributions with asymptotically exponentially thin tails. But, as shown in Lemma \ref{lemma:colaexp}, this will be true for every degree distribution after applying the map $M_1(\cdot)$ one time.}.

\subsection{Proof of Theorem \ref{thm:furtherapp}}\label{sec:proof22}
    
    The structure of the remaining of the proof the following. We will first give the description of the stochastic process associated to the \emph{second phase} of the dynamics. After this, we use the results in \ref{sec:fluidres} to establish hydrodynamic limits for this process. Finally, we use this limits to prove the statement of the theorem.
    \\\\
    \emph{(i) Stochastic description of the second phase:} this phase of the dynamics consists of sequentially matching the half-edges of blocked vertices and removing the edges so formed. This is done until no more unmatched blocked half-edges remain. In this phase, the states of vertices are not changed, only their degrees; and so, no vertex is added to the independent set.
    
    Initially, we have that graph has $B_{T_1^{(n)}}^{(n)}$ unmatched blocked half-edges and (for $k\geq2$) $\mu^{(n)}_{T_1^{(n)}}(k)$ unexplored vertices of degree $k$. Because of the results of step (ii) of the proof of Theorem \ref{thm:oneapplication} we have that $B_{T_1^{(n)}}^{(n)}/n\xrightarrow{\mathbb{P}}Q^2 \lambda -\sum_{i\geq2}iQ^ip_i$ and (for every $k\geq2$) $\mu_{T_1^{(n)}}^{(n)}(k)/n\xrightarrow{\mathbb{P}} Q^k p_k$.

    Then, at each time $t \geq 0$ the state of the process will be described by the infinite dimensional vector $(U_t^{(n)},B_t^{(n)},\mu_t^{(n)}(0),\mu_t^{(n)}(1),...)$. The process in question will evolve as follows: at each time $t \geq 0$ every unmatched blocked half-edge will have an exponential clock with rate $U_t^{(n)}/B_t^{(n)}$ (and, when $B_t^{(n)}=0$, we define the transition rate as $0$). When one of the clocks of some of this half-edges rings, it is uniformly matched to another free half-edge and the edge formed is removed from the graph. The process will go on until there are no more free blocked half-edges to be paired. Because it is defined by well-behaved transition rates, it is straightforward that the resulting process is Markovian.
    \\\\
\emph{(ii) Hydrodynamic limit of the second phase:} here we establish the convergence of the process associated to the second phase of the dynamics towards the solutions of a set of differential equations. As before, we first find the drifts associated to each one of the coordinates of the state vector:

    \begin{itemize}
        \item With rate $U_t^{(n)}$, the clock of one of the free blocked half-edges rings, at which point it is paired to another free half-edge. So, $U_t^{(n)}$ has a drift given by $\delta(U_t^{(n)}):= - 2 U_t^{(n)}$.
        \item With rate $U_t^{(n)}$, the clock of one of the free blocked half-edges rings, at which point two things can happen: with probability $\frac{B_t^{(n)}}{U_t^{(n)}}$ the half-edge is matched to another free blocked half-edge and therefore $B_t^{(n)}$ is reduced by $2$; or with probability $\frac{U_t^{(n)}-B_t^{(n)}}{U_t^{(n)}}$ is matched to a half-edge belonging to an unexplored vertex and $B_t^{(n)}$ is only reduced by $1$. So, $B_t^{(n)}$ has a drift given by $\delta'(U_t^{(n)},B_t^{(n)}):= -( B_t^{(n)} + U_t^{(n)} )$.
        \item Finally, with rate $U_t^{(n)}$, the clock of one of the free blocked half-edges rings and if matched to an unexplored half-edge, an unexplored vertex is selected according to the size-biased distribution and has one of its half-edges removed. This means that (for each $k\geq0$) with probability $\frac{k \mu_t^{(n)}(k)}{U_t^{(n)}}$ a degree $k$ vertex is selected to be matched and therefore $\mu_t^{(n)}(k)$ is reduced by $1$ and $\mu_t^{(n)}(k-1)$ is increased by $1$. So, $\mu_t^{(n)}(k)$ has a drift given by $\delta_k(\mu_t^{(n)}(k)) := -k \mu_t^{(n)}(k) + (k+1) \mu_t^{(n)}(k+1)$.
    \end{itemize}
    
    Fix some $\delta > 0$, then the sequence of processes $\frac{U_t^{(n)}}{n}$, $\frac{B_t^{(n)}}{n}$ and (for every $k\geq 0$) $\frac{\mu_t^{(n)}(k)}{n}$, are (for $n$ large enough) uniformly bounded by $\lambda + \delta$. They are then uniformly bounded. This, in turn, means that all the drifts associated to the coordinates of  $(U_t^{(n)},B_t^{(n)},\mu_t^{(n)}(0),\mu_t^{(n)}(1),...)$ are uniformly bounded.
    
    By the results shown in the proof of Theorem \ref{thm:oneapplication}, $U_0^{(n)}/n \rightarrow \lambda Q^2$, $B_0^{(n)}/n \rightarrow \lambda Q^2 - \sum_{i\geq2} i Q^i p_i$ and (for every $k\geq0$) $\mu_0^{(n)}(k)/n \rightarrow Q^k p_k \mathbb{1}_{k\geq2}$.
    
    Here we will denote the Dynkin's martingales associated to $U_t^{(n)}$, $B_t^{(n)}$ and (for each $k\geq0$) $\mu_t^{(n)}(k)$ by $M_t^{(n)}$, $M'^{(n)}_t$ and $N_t^{(n)}(k)$, respectively. These are all martingales of locally finite variation. Therefore, their quadratic variation will be given by
    
    \begin{equation}
        [M^{(n)}_t]_t = \sum_{0\leq s \leq t}(\Delta M_s^{(n)})^2 = \sum_{0\leq s \leq t}(\Delta U_s^{(n)})^2 \leq \sum_{s \geq 0}(\Delta U_s^{(n)})^2 \leq 4 B_0^{(n)} n = \mathcal{O}(n)
    \end{equation}
    
    \begin{equation}\label{eq:quadvarB2}
        [M'^{(n)}_t]_t = \sum_{0\leq s \leq t}(\Delta M'^{(n)}_s)^2 = \sum_{0\leq s \leq t}(\Delta B_s^{(n)})^2 \leq \sum_{s \geq 0}(\Delta B_s^{(n)})^2 \leq 2 B_0^{(n)} n = \mathcal{O}(n)
    \end{equation}
    
    \begin{equation}
        [N^{(n)}_t(k)]_t = \sum_{0\leq s \leq t}(\Delta N^{(n)}_s(k))^2 = \sum_{0\leq s \leq t}(\Delta \mu_s^{(n)}(k))^2 \leq \sum_{s \geq 0}(\Delta \mu_s^{(n)}(k))^2 \leq Q^k p_k^{(n)} n = \mathcal{O}(n)
    \end{equation}
    
    Now, the corresponding system of integral equations (as presented in Lemma \ref{lema:limitefluido}) will be given by:

\begin{equation}\label{eq:eqdifU2}
    u_t = \lambda - \int_0^t 2 u_s ds
\end{equation}

\begin{equation}\label{eq:eqdifB2}
    b_t = b_0 - \int_0^t (b_s + u_s) ds
\end{equation}

\begin{equation}\label{eq:eqdifMu2}
    \mu_t(k) = Q^k p_k \mathbb{1}_{\{k\geq2\}} + \int_0^t (k+1) \mu_s(k+1) - k \mu_s(k) ds
\end{equation}

The first two equations can be directly integrated to give

\begin{equation}\label{eq:solU2}
    u_t = \lambda Q^2 e^{-2 t}
\end{equation}

\begin{equation}\label{eq:solB}
    b_t = Q^2 \lambda e^{-2t} - e^{-t} \sum_{i\geq2} i Q^i p_i
\end{equation}

While for equations \eqref{eq:eqdifMu2} with $k\geq1$, they can be seen to have normal modes given by

\begin{equation}
    \eta_k(i) = (-1)^{k-i} \binom{k}{i} \mathbb{1}_{\{i\leq k\}}
\end{equation}

where $i\geq1$ and with associated eigenvalues $\omega_k = -k$.

Furthermore, the uniqueness of these solutions can be proved by decoupling the system, and writing it in the base of the normal modes. This results in a countable number of independent equations with Lipschitz derivatives and the uniqueness then follows by standard ODE theory.

We are thus under the conditions of Lemma \ref{lema:limitefluido} and we will therefore have that, uniformly in compact sets, $U_t^{(n)}/n\xrightarrow{\mathbb{P}}u_t$, $B_t^{(n)}/n\xrightarrow{\mathbb{P}}b_t$ and (for every $k\geq0$) $\mu_t^{(n)}(k)/n\xrightarrow{\mathbb{P}}\mu_t(k)$.

Moreover, if we define the stopping time $T_2^{(n)} := \inf\{t\geq0: B_t^{(n)}=0\}$ and the deterministic time $T_2 := \inf\{t\geq0:b_t=0\}$, it can be proved that the conditions of Lemma \ref{lema:convtiempo} hold. For this, first note that $B_t^{(n)}$ is in fact decreasing (it only transitions to states of lower value) and that for every $i\in\mathbb{N}$ if $B_t^{(n)} = i$ then the transition rate to lower states is given by

\begin{equation*}
    \sum_{j\leq i} Q_{ij} = E_t^{(n)} + B_t^{(n)} = U_t^{(n)}
\end{equation*}

Because for every $t \leq T_2^{(n)}$ we have that $U_t^{(n)} \geq U_0^{(n)} - 2 B_0^{(n)}$, if the initial proportion of blocked vertices if smaller than $1/2$ then $U_t^{(n)}$ will be uniformly lower bounded by a positive number. But because of Lemma \ref{lemma:caemitad} of Appendix \ref{app:urn}, the proportion of blocked vertices will drop w.h.p. (for any initial value) below $1/2$ at a time where $U_t^{(n)}$ is still a positive proportion of $n$. And so, the strict positivity of $U_t^{(n)}/n$ will still be true. 

Furthermore, if we define $\tilde \lambda := \sum_{i\geq2} i Q^i p_i$, $T_2$ can be explicitly found to be given by $\log( Q^2 \lambda / \tilde \lambda) = - \log \tilde Q$. Since (for every $t\leq T_2$)

\begin{equation*}
    \dot b_t = -2 Q^2 \lambda e^{-2t} + e^{-t} \tilde \lambda \leq -2 Q^2 \lambda e^{-T_2} + \tilde \lambda = \tilde \lambda \left[ 1 - \frac{2 \tilde \lambda}{Q^2 \lambda} \right]    
\end{equation*}

where the r.h.s. is strictly negative, then we can apply Lemma \ref{lema:convtiempo} to show that $T_2^{(n)}\xrightarrow{\mathbb{P}}T_2$. And by Corollary \ref{coro:convcoord}, we will have that

\begin{equation*}
    U_{T_2^{(n)}}/n \xrightarrow{\mathbb{P}} u_{T_2}
\end{equation*}
\begin{equation*}
    B_{T_2^{(n)}}/n \xrightarrow{\mathbb{P}} b_{T_2}
\end{equation*}
\begin{equation*}
    \mu_{T_2^{(n)}}(1)/n \xrightarrow{\mathbb{P}} \mu_{T_2}(1)
\end{equation*}
\begin{equation*}
    \mu_{T_2^{(n)}}(2)/n \xrightarrow{\mathbb{P}} \mu_{T_2}(2)
\end{equation*}
\begin{equation*}
    . \ . \ .
\end{equation*}

Finally, because $\sum_{k\geq1} k^2 p_k^{(n)}$ converges towards $\sum_{k\geq1} k^2 p_k$, then these sums are uniformly summable. And because by Lemma \ref{lemma:colaexp}, we have that $\mu_{T_2^{(n)}}^{(n)}(k)/n$ is $\mathcal{O}_\mathbb{P}(e^{-\gamma k})$ (for some $\gamma > 0$), the sums $\sum_{k\geq2}k^2 \mu^{(n)}_{T_2^{(n)}}/n$ will be w.h.p. eventually uniformly summable. This implies that

\begin{equation*}
    \sum_{k\geq1} k^2 p_k^{(n)}\mu^{(n)}_{T_2^{(n)}}/n\xrightarrow{\mathbb{P}} \sum_{k\geq2} k^2 \mu_{T_2}(k)    
\end{equation*}

We then recover the (CA) for the degree distribution of the remaining graph after the second phase. This means that the graph obtained when the second phase is finished can be treated as a Configuration Model with limiting degree distribution (for $k\geq1$) $\tilde p_k = \mu_{T_2}(k)/Z$, where $Z$ is just a normalisation constant.

Finally, we show that the condition of exponentially thin tails in Proposition \ref{prop:optimalidadCM} is not really restrictive, as every initial distribution results in a distribution with this property after one application of the map $M^{(n)}_1(\cdot)$.

\begin{lemma}\label{lemma:colaexp}
    Under the same hypothesis of Theorem \ref{thm:oneapplication} and further assuming that $p_1 > 0$, then $M_1(p_i)(k)$ is $\mathcal{O}(e^{-\gamma k})$ with $\gamma := - log( Q ) > 0$.
\end{lemma}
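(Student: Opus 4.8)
The plan is to read the exponential decay of $M_1(p_i)(k)$ off the hydrodynamic description of the two phases of the map, and to do so by bounding the whole tail $\sum_{j\geq k}M_1(p_i)(j)$ rather than the individual coefficients. Recall from step~(iii) of the proof of Theorem~\ref{thm:oneapplication} and from the proof of Theorem~\ref{thm:furtherapp} that the limiting degree distribution produced by one application of the map is $M_1(p_i)(k)=\mu_{T_2}(k)/Z$, where $\mu_{T_2}(\cdot)$ is the fluid limit of the second phase evaluated at its stopping time $T_2$ (via Corollary~\ref{coro:convcoord}) and $Z=\sum_{j\geq0}\mu_{T_2}(j)=1-\lambda(1-Q^2)-p_0$ is the limiting proportion of surviving vertices. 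Since $p_1>0$ we have $Q=1-p_1/\lambda<1$; the degenerate case $Q=0$ (all mass on degrees $0$ and $1$) leaves an asymptotically empty remaining graph, so $M_1(p_i)$ is the null sequence and the statement is trivial. Hence assume $0<Q<1$ and $Z>0$; it then suffices to show $\mu_{T_2}(k)=\mathcal{O}(Q^k)$.

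First I would record the state at the \emph{end of phase~1}: by step~(iii) of the proof of Theorem~\ref{thm:oneapplication}, $\mu_{T_1}(k)=Q^kp_k$ for $k\geq2$ and $\mu_{T_1}(0)=\mu_{T_1}(1)=0$. Summing and using that $(p_j)_{j\geq0}$ is a probability distribution and $Q<1$ gives $\sum_{j\geq k}\mu_{T_1}(j)=\sum_{j\geq\max(k,2)}Q^jp_j\leq Q^k$, so the tail of the post-phase-1 distribution is already exponentially thin.

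Next I would use that during phase~2 every unexplored vertex can only lose half-edges, so the mass of vertices of degree $\geq k$ cannot increase. Concretely, from the integral equations~\eqref{eq:eqdifMu2} one telescopes
\begin{equation*}
\frac{d}{dt}\sum_{j\geq k}\mu_t(j)=\sum_{j\geq k}\bigl[(j+1)\mu_t(j+1)-j\mu_t(j)\bigr]=-k\,\mu_t(k)\leq0,
\end{equation*}
the rearrangement being legitimate because $\sum_{j\geq0}\mu_t(j)=Z<\infty$. Integrating from $0$ to $T_2$, with initial datum $\mu_{T_1}(\cdot)$, yields for every $k\geq1$ the bound $\mu_{T_2}(k)\leq\sum_{j\geq k}\mu_{T_2}(j)\leq\sum_{j\geq k}\mu_{T_1}(j)\leq Q^k$, and therefore $M_1(p_i)(k)=\mu_{T_2}(k)/Z\leq Z^{-1}e^{-\gamma k}$ with $\gamma:=-\log Q>0$, as claimed.

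I do not expect a substantial obstacle here; the computations are routine and the key point is simply that $M_1$ can only thin the degree sequence relative to the already-exponentially-thin distribution obtained after phase~1. The only items needing a little care are bookkeeping ones already handled earlier in the paper: that $Z$ is a genuine positive constant (equivalently that the remaining graph keeps growing, noted to be easy to check), that the series over degrees may be rearranged in the telescoping step, and that $\mu_{T_1}(\cdot)$ and $\mu_{T_2}(\cdot)$ are the correct limiting objects, which is exactly the content of steps~(ii)--(iii) of the proofs of Theorems~\ref{thm:oneapplication} and~\ref{thm:furtherapp} together with Corollary~\ref{coro:convcoord}.
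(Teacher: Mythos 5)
Your proof is correct, and it reaches the paper's central estimate $\mu_{T_2}(k)\leq\sum_{j\geq k}\mu_{T_1}(j)\leq Q^{k}$ by a somewhat more elementary route than the paper's. The paper introduces an auxiliary modified ODE system $\tilde\mu_t(\cdot)$ (freezing coordinates $i<k$ and dropping the decay term at coordinate $k$), proves coordinatewise domination $\mu_t(k)\leq\tilde\mu_t(k)$, and argues that $\tilde\mu_t(k)\nearrow\sum_{i\geq k}\tilde\mu_0(i)$ as $t\to\infty$ because that tail mass is conserved by the modified dynamics. You instead differentiate the tail mass of the \emph{original} system, telescope to get $\frac{d}{dt}\sum_{j\geq k}\mu_t(j)=-k\,\mu_t(k)\leq0$, and conclude $\mu_{T_2}(k)\leq\sum_{j\geq k}\mu_{T_2}(j)\leq\sum_{j\geq k}\mu_{T_1}(j)$ directly, with no auxiliary process and no ``easily shown'' monotone-convergence step. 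The underlying mechanism is the same in both arguments --- phase 2 only lowers degrees, so mass at degree $\geq k$ cannot grow --- but your version is shorter and self-contained. Two small bookkeeping remarks. First, your justification for the interchange in the telescoping (finiteness of $\sum_j\mu_t(j)$) is slightly short of what is actually needed: one needs the first moment $\sum_j j\mu_t(j)$ to be finite, not just the zeroth. This does hold, since at the start of phase 2 the first moment is $\sum_{j\geq2}jQ^jp_j\leq\lambda<\infty$ by the finite--second--moment hypothesis, and along the dynamics vertices only lose half-edges so the first moment cannot increase; one clean way to make this rigorous is to telescope the finite sums $\sum_{j=k}^{N}$ in the integral form of \eqref{eq:eqdifMu2} and let $N\to\infty$ using $\sup_{s\leq t}(N+1)\mu_s(N+1)\to0$. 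Second, in the paper's proof of this lemma $M_1(p_i)(k)$ is used unnormalized, that is, as $\mu_{T_2}(k)$ rather than $\mu_{T_2}(k)/Z$; since $Z>0$ is a constant this is immaterial for the $\mathcal{O}(e^{-\gamma k})$ conclusion, but it is worth noting that your normalization introduces an extra step relative to what the paper intends.
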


\begin{proof}
    Recall that $M_1(p_i)(k)$ is given by the $k$-th coordinate of the solution of the system \eqref{eq:eqdifMu2} at time $T_2$. It can then be seen to be smaller or equal to the $k$-th coordinate of the solution of the modified system:
    
    \begin{equation*}
        \begin{cases}
            \tilde \mu_t(i) = Q^i p_i + \int_0^t (i+1) \tilde \mu_s(i+1) - i \tilde \mu_s(i) ds, & \mbox{if } i > k \\
            \tilde \mu_t(i) = Q^i p_i + \int_0^t (i+1) \tilde \mu_s(i+1) ds, & \mbox{if } i = k \\
            \tilde \mu_t(i) = Q^i p_i, & \mbox{if } 1 \leq i < k \\
        \end{cases}  
    \end{equation*}
    
    at time $T_2$.
    
    Furthermore, the $k$-th coordinate of this system can be easily shown to converge monotonically $\tilde \mu_t(k) \nearrow \sum_{i=k}^\infty \tilde \mu_0(i)$ as $t \rightarrow \infty$. Then,
    
    \begin{equation*}
        \mu_{T_2}(k) \leq \tilde \mu_{T_2}(k) \leq \sum_{i=k}^\infty \tilde \mu_0(i) = \sum_{i=k}^\infty \mu_0(i) \leq C \sum_{i=k}^\infty Q^k = \mathcal{O}(Q^k)
    \end{equation*}
    
    where $C>0$ is some constant.
    
\end{proof}

\section{Possible extensions}

In this work we showed that, for a random graph with given degrees, if the degree-greedy algorithm selects only degree 1 or 0 vertices until the remaining graph is subcritical, then the independent set obtained by it is of the same size as a maximum one up to an error term smaller than any positive power of the graph size. We then characterised for which asymptotic degree distributions this happens and gave a way of computing their independence ratio.

It is still an open issue to show if the independent set found is always maximum asymptotically a.s. as in the Erdös-Rényi case; and if not, under which conditions it is.

In Section \ref{sec:exandapp} we explained how, changing higher degree vertices by degree 1 vertices, upper bounds can be obtained for the independence number of general graphs. It would be possible, in principle, to obtain tighter bounds by finding an optimal way of dominating the studied graphs by a graph in which the degree-greedy algorithm is asymptotically optimal.

Furthermore, Lemma \ref{lemma:colaexp} seems to suggest that the pairing of degree 1 vertices quickly generates an exponential tail in the resulting degree distribution. We then conjecture that the condition of finite second moment in Theorems \ref{thm:oneapplication} and \ref{thm:furtherapp} could in fact be avoided, extending the result to heavy-tailed distributions.

Finally, the Glauber dynamics' invariant measure is known (under certain limits) to concentrate around maximum independent sets. Nevertheless, when characterised in this limit, the mixing times are exponential in the graph size. The results of this work might help in showing that the mixing time could be reduced by starting the dynamics from an independent set found by a degree-greedy algorithm.

\bibliographystyle{abbrv}

\bibliography{main}

\appendix

\section{Appendix: Pairing urn model}\label{app:urn}

Suppose we have the following urn problem, which we will refer to as the \emph{Pairing urn model}. Initially we have an urn with $k$ red balls and $n-k$ white balls. At each step of the process, a red ball is removed and after that a second ball is chosen uniformly from the urn and is also removed. The process is continued until there are no more red balls left. What we will prove here is that, if $k(n)= X_0 n + o(n)$ (for some $0<X_0<1$), then the proportion of red balls drops w.h.p. below $1/2$ in a time where there are still a positive proportion of the balls still in the urn. We will denote by $(R_i)_{i\in\mathbb{N}}$ the process that gives at each step $i\geq1$ the number of red balls removed so far. We will also define the stopping time $T : = \inf \{ i \geq0 : (k-R_i) / (n- 2i) < 1/2 \}$ when the proportion of red vertices drops below $1/2$.

\begin{lemma}\label{lemma:caemitad}
    Let $0 < X_0 < 1$ be the asymptotic initial proportion of red balls in a pairing urn process of urn size $n\in\mathbb{N}$, then w.h.p. $T < n/2$.
\end{lemma}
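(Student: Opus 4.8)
The plan is to track the proportion of red balls $P_i := (k - R_i)/(n - 2i)$ as the process runs and show it must cross $1/2$ while $n - 2i$ is still a positive fraction of $n$. First I would set up a hydrodynamic (fluid) description: after $i = \lfloor tn \rfloor$ steps, the number of red balls removed is $R_i \approx r_t n$ and the urn has $n - 2i \approx (1-2t)n$ balls. At each step one red ball is removed deterministically and a second ball is removed uniformly; conditional on the current state, the second ball is red with probability $P_i$. Hence the number of red removals has drift $1 + P_i$ per step, giving the ODE $\dot r_t = 1 + (X_0 - r_t)/(1 - 2t)$ with $r_0 = 0$, valid for $t$ bounded away from the terminal time. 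This is a linear first-order ODE; solving it (integrating factor $(1-2t)^{-1/2}$) yields an explicit expression for $r_t$, and correspondingly for the red proportion $p_t := (X_0 - r_t)/(1-2t)$, which one checks satisfies $p_0 = X_0$ and is monotone decreasing, with $p_t \to -\infty$ (formally) as $t \uparrow 1/2$, so in particular $p_t$ hits $1/2$ at some explicit time $t^\ast = t^\ast(X_0) < 1/2$. If $X_0 \le 1/2$ already then $T = 0$ and there is nothing to prove, so assume $X_0 > 1/2$; then $t^\ast \in (0, 1/2)$ and $1 - 2t^\ast > 0$.

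Next I would make this rigorous by a concentration argument rather than invoking Lemma \ref{lema:limitefluido} verbatim (the urn is a discrete-time chain, but the same Doob/Dynkin-martingale reasoning applies). Write $R_i = \sum_{j=1}^{i} \xi_j$ where $\xi_j \in \{1,2\}$ is the number of red balls removed at step $j$; then $R_i - \sum_{j=1}^i (1 + P_{j-1})$ is a martingale with bounded increments, so by Azuma–Hoeffding $\sup_{i \le cn} |R_i - \sum_{j<i}(1+P_j)|$ is $\mathcal{O}(\sqrt{n \log n})$ with high probability, for any fixed $c < 1/2$. Coupled with a standard Grönwall-type comparison this forces $\sup_{i \le t^\ast n} |R_i/n - r_{i/n}| \xrightarrow{\mathbb{P}} 0$, hence $P_{\lfloor t^\ast n\rfloor} \xrightarrow{\mathbb{P}} p_{t^\ast} = 1/2$. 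To conclude $T < n/2$ w.h.p., I would use that $p_t$ has strictly negative derivative at $t^\ast$ (it is decreasing and, from the explicit solution, $\dot p_{t^\ast} < 0$): pick a slightly later time $t^\ast + \epsilon < 1/2$ at which the fluid proportion is $1/2 - \delta$ for some $\delta > 0$; then on the high-probability event where $P_{\lfloor (t^\ast+\epsilon)n \rfloor}$ is within $\delta/2$ of $1/2 - \delta$, we have $P_{\lfloor (t^\ast+\epsilon)n\rfloor} < 1/2$, so $T \le (t^\ast + \epsilon) n < n/2$. Moreover at that time $n - 2\lfloor(t^\ast+\epsilon)n\rfloor$ is still $\approx (1 - 2t^\ast - 2\epsilon)n > 0$, which is the "positive proportion still in the urn" claim used in the proofs of Theorems \ref{thm:oneapplication} and \ref{thm:furtherapp}.

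The main obstacle is controlling the fluid approximation \emph{uniformly up to a time where the denominator $1 - 2t$ is small}: the drift $P_i = (k-R_i)/(n-2i)$ blows up near the terminal time, so the ODE is not Lipschitz on the whole interval $[0,1/2)$ and the martingale increments, while bounded by $2$, are divided by a vanishing quantity when one passes to the proportion. The way around this is exactly that we only need the statement up to time $t^\ast + \epsilon$, which is bounded strictly away from $1/2$ by a constant depending only on $X_0$; on $[0, t^\ast + \epsilon]$ the coefficient $1/(1-2t)$ is bounded, the ODE is Lipschitz, and Grönwall applies cleanly. A minor secondary point is the case $X_0$ close to $1$: then $t^\ast$ is close to $1/2$ and the window $(t^\ast, 1/2)$ is narrow, but it is still a fixed positive-length interval for each fixed $X_0 < 1$, so the argument goes through without change.
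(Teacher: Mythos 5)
Your proof is correct but follows a genuinely different route from the paper's.  The paper proves Lemma~\ref{lemma:caemitad} by a coupling: it compares the pairing urn to a sampling-without-replacement urn started from $k-l$ red and $n-k$ white balls, shows by a contradiction argument that the crossing time $T_l$ of the two red proportions is at most $l$, applies Hoeffding's large-deviation bound for hypergeometric variables to control $\tilde X_i(l)$, and then — because one choice of $l$ only takes the proportion down by a bounded amount — iterates this over a fixed partition $a_j = 1-(2/3)^j$ of $(1/2,1)$ to handle $X_0$ close to $1$.  You instead set up the fluid ODE for $r_t = \lim R_{\lfloor tn\rfloor}/n$, which in terms of $p_t = (X_0-r_t)/(1-2t)$ reduces to $\dot p_t = (p_t-1)/(1-2t)$, i.e.\ $p_t = 1 - (1-X_0)(1-2t)^{-1/2}$, so that the fluid proportion hits $1/2$ at the explicit time $t^* = \tfrac{1}{2}\bigl(1-4(1-X_0)^2\bigr) < 1/2$ for every $0<X_0<1$; a bounded-increment martingale plus Azuma--Hoeffding and Gr\"onwall on the compact interval $[0,t^*+\epsilon]$, where $1/(1-2t)$ stays bounded, then gives the concentration.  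Your approach buys an explicit formula for the crossing time and avoids the inductive sweep over intervals, at the cost of invoking ODE/Gr\"onwall machinery; the paper's argument is more combinatorial and elementary (pure coupling and Hoeffding, no differential equations), but needs the $a_j$-induction precisely because one comparison urn cannot take a very red urn all the way below $1/2$ in a single step.  One small point worth tightening in your write-up: the drift term at step $j$ is $P_{j-1}$ only up to an $\mathcal{O}(1/n)$ correction (the second draw has probability $(k-R_j-1)/(n-2j-1)$, not $(k-R_j)/(n-2j)$), but since these corrections sum to $\mathcal{O}(\log n)$ over $cn$ steps they are absorbed into the Azuma error term, so the argument goes through as stated.
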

\begin{proof}
We will compare $(R_i)_{i\in\mathbb{N}}$ to a second process $(\tilde R_i(l))_{i\in\mathbb{N}}$, where $l(n)$ is some function of $n$ to be fixed later. At each step $i\geq1$, $\tilde R_i(l)$ will give the total number of red balls removed so far from an urn without replacement with initially $k-l$ red and $n-k$ white balls. We will denote by $X_i$ and $\tilde X_i(l)$ the corresponding proportion of red balls in the urns for both processes.

At each time $j\geq1$ the probability of drawing a red ball for the pairing urn is given by a Bernoulli r.v. of parameter $X_j=(k-R_j)/(n-2j)$ and the corresponding probability for the urn without replacement will also be a Bernoulli r.v. of parameter $\tilde X_j(l)=(k-l-\tilde R_j(l))/(n-l-j)$. We can then couple both selection probabilities by the usual coupling for two Bernoulli variables. Defining $T_l := \inf\{ i\geq1 : X_i \leq \tilde X_i(l) \}$, we will then have that, for every $i \leq T_l$, $R_i - i \geq \tilde R_i(l)$. Now, suppose that $T_l > l$, then at step $l$ we will have that $R_l - l \geq \tilde R_l(l)$ which implies that the proportion of vertices obeys

\begin{equation}
    X_l = \frac{k - R_l}{n-2l} \leq \frac{k - l - \tilde R_l(l)}{n - 2l} = \tilde X_l(l)
\end{equation}

Which contradicts the hypothesis that $T_l > l$. We will then have that $T_l \leq l$ a.s. At each step $i\geq1$, the corresponding value of $\tilde X_i(l)$ will be distributed according to

\begin{equation}
    \mathbb{P}(\tilde R_i(l) = n-l-m) = \frac{\binom{k-i}{(k-i)-m}\binom{n-k}{m-(k-2i)}}{\binom{n-i}{i}}
\end{equation}

By \cite{hoeffding1963probability} we will have that for $i \leq l$, the probability that the proportion of vertices deviating $\delta > 0$ from its mean (which is its initial value $\tilde X_0(l) = (k-l)/(n-l)$) will be upper bounded by

\begin{equation}\label{eq:ldurna}
    \mathbb{P}( \tilde X_i(l) - \tilde X_0(l) > \delta ) \leq \left[ \left( \frac{\tilde X_0(l)}{\tilde X_0(l)+q(n,\delta)} \right)^{\tilde X_0(l)+q(n,\delta)} \left( \frac{1-\tilde X_0(l)}{1- \tilde X_0(l)-q(n,\delta)} \right)^{1-\tilde X_0(l)-q(n,\delta)} \right]^n
\end{equation}

Where $q(n,\delta) := (1 - 2l/n) \delta$. In particular, if asymptotically $2 l(n) < n$, we will then have that

\begin{equation}
    \mathbb{P}\left( \sup_{i\leq l} \tilde X_i - \tilde X_0 \geq \delta \right) \xrightarrow{n\rightarrow\infty} 0 
\end{equation}

And because $T_l \leq l$, fixing $l = 2 k + \delta' - n$ (where $\delta'>0$), gives $\tilde X_0(l) < 1/2$; which implies that the proportion of red balls for the first process will drop in a finite time below $1/2$. Further more, if there exists a $\delta'$ s.t. $2 l < n$, the number of remaining balls in the urn at time $l$ will be a positive proportion of $n$ (i.e., $T < n/2$) as in each step exactly two balls are removed. It is easy to check that there will exist such $\delta'$ whenever $X_0 < 3/4$.

Lets see that when the initial proportion of red vertices is higher than $3/4$ our claim is still true. To prove this, we will work inductively. Call $a_1 := 1/2$ and (for every $i \geq 2$) $a_i := 1 - (2/3)^{i}$. We will suppose that initially $X_0 \in [a_j,a_{j+1})$. The process then arrives at the interval $[a_{j-1},a_j)$ in a finite time and with a positive proportion of $n$ of balls left in the urn. This will follow from the same argument as above by fixing $l(n) = ((X_0 - a_j) /(1- a_j) + \delta'') n$, for some $\delta''> 0$ small enough. This value of $l(n)$ will then give that $\tilde X_0(l) < a_j$ and $l(n) < (1/3+ \delta'') n$, assuring that the same reasoning as before can be used. Because these intervals are a partition of $(1/2,1)$, if the pairing urn starts with any initial proportion of red balls higher than $1/2$, it will eventually (after going through a finite number of intervals) drop below $1/2$ at a time where there are still a positive proportion of balls in the urn. Which is equivalent to say that $T < n/2$. \end{proof}

\end{document}